\def\phi{\varphi}
\def\rho{\varrho}
\def\epsilon{\varepsilon}
\numberwithin{equation}{section}
\theoremstyle{plain}
\newtheorem{theorem}[equation]{Theorem}
\newtheorem{lemma}[equation]{Lemma}
\newtheorem{corollary}[equation]{Corollary}
\theoremstyle{definition}
\newtheorem{definition}[equation]{Definition}
\theoremstyle{remark}
\newtheorem{remark}[equation]{Remark}
\renewcommand{\leq}{\leqslant}
\renewcommand{\geq}{\geqslant}
\begin{document}
\title[Multiplication of Besov and Triebel-Lizorkin spaces]{\textbf{Mixed
multiplication of Besov and Triebel-Lizorkin spaces}}
\author[D. Drihem]{Douadi Drihem}
\address{M'sila University, Department of Mathematics, Laboratory of
Functional Analysis and Geometry of Spaces, M'sila 28000, Algeria.}
\email{douadidr@yahoo.fr, douadi.drihem@univ-msila.dz}
\thanks{ }
\date{\today }
\subjclass[2010]{46E35, 41A05.}

\begin{abstract}
This paper is concerned with proving some embeddings of the form%
\begin{equation*}
F_{p_{1},q}^{s_{1}}\cdot B_{p_{2},\infty }^{s_{2}}\cdot ...\cdot
B_{p_{m},\infty }^{s_{m}}\hookrightarrow F_{p,q}^{s_{1}},\quad m\geq 2.
\end{equation*}%
The different embeddings obtained here are under certain restrictions on the
parameters. In particular, we improve some results of pointwise
multiplication on Triebel-Lizorkin spaces. Franke-Jawerth\ embeddings, the $%
\pm $ - method of Gustaffson-Peetre and the relation between Hardy spaces
and Triebel-Lizorkin spaces are the main tools.
\end{abstract}

\keywords{Besov space, Triebel-Lizorkin space, $\pm $ - method of
Gustaffson-Peetre.}
\maketitle

\section{Introduction}

Approximately 45 years ago Peetre \cite{P}\ and Triebel \cite{T1}, \cite{T2}%
, independent from each other, have applied a special decomposition of the
product $f\cdot g$ to investigate the product 
\begin{equation*}
A_{p_{1},q_{1}}^{s_{1}}\cdot A_{p_{2},q_{2}}^{s_{2}}\hookrightarrow
A_{p,q}^{s}
\end{equation*}%
in case of $p_{1}=p_{2}=p$. Here $A_{p,q}^{s}$ stands for either the Besov
space $B_{p,q}^{s}$ or the Triebel-Lizorkin space $F_{p,q}^{s}$. This
method, nowadays called paramultiplication, was applied later on by many
authors. Concerning earlier contributions to this subject the paper of
Yamazaki \cite{Y} deals with the situation where $p_{1}=p_{2}=p$, whereas
Sickel treats the cases with $p_{1}=p_{2}\neq p$. Hanouzet \cite{H85} has
investigated $p_{1}\neq p_{2}$ but restricted to Besov spaces. In the a
recent work of Sickel and Triebel the case $p_{1}\neq p_{2}$ is also studied
and a rather complete set of necessary conditions is given. A new necessary
and sufficient conditions are given in the paper of J. Johnsen \cite{J}, see
also J. Marschall \cite{M1} and \cite{M2}, , see Amann \cite{A} for Sobolev
and Besov spaces.

Concerning the case $F_{p_{1},q_{1}}^{s_{1}}\cdot
B_{p_{2},q_{2}}^{s_{2}}\hookrightarrow F_{p,q}^{s}$. This case were studied
by Franke \cite{F}\ and Marschall in \cite{M1} and \cite{M2} with $p_{1}=p$.
In recent paper \cite{DM2}, the case $p_{1}\neq p\ $is also studied where
known sufficient conditions for pointwise multiplication have been improved.
More close to this contribution is the book of T. Runst, W. Sickel \cite{RS}%
, see also V. Maz'ya, T. Shaponiskova \cite{MSh}.

The motivation to study the problem of multiplication on function spaces
comes from applications to partial differential equations, see for example 
\cite{Wu}, where estimates of the product on function spaces are handy\ in
dealing with the quadratic nonlinear term in many partial differential
equations, see also, H. Bahouri, J.-Y. Chemin, and R. Danchin \cite{BCD11},
V.G. Maz'ya and T.O. Shaposhnikova \cite{MSh}, and Zeidler \cite{Z}.
Furthermore, estimates of products of functions have played a key role in
investigations of composition operators, see \cite[Chapter 5]{RS}.

Recently, Meyries and Veraar \cite{MM12},\ and Lindemulder \cite{L21}
considered Besov and Bessel potential spaces $B_{p,q}^{s}(\mathbb{R}%
^{n},w),H_{p}^{s}(\mathbb{R}^{n},w)$ with respect to the weight $%
w(x,t)=|t|^{\alpha },x\in \mathbb{R}^{n-1},t\in \mathbb{R}$. Under some
suitable assumptions on $s,p,q$ and $\alpha $, they observed that the
characteristic function of the half space is a pointwise multiplier for $%
B_{p,q}^{s}(\mathbb{R}^{n},w),H_{p}^{s}(\mathbb{R}^{n},w)$.

The purpose of this paper is to study the $m$-linear map%
\begin{equation*}
F_{p_{1},q}^{s_{1}}\cdot A_{p_{2},\infty }^{s_{2}}\cdot ...\cdot
A_{p_{m},\infty }^{s_{m}}\hookrightarrow F_{p,q}^{s_{1}},\quad m\geq 2,
\end{equation*}%
induced by 
\begin{equation*}
(f_{1},f_{2},...,f_{m})\longrightarrow f_{1}\cdot f_{2}\cdot ...\cdot f_{m}.
\end{equation*}

We want to present here, briefly, the contents of our work. In Section 2 we
recall the definition of the different spaces and some necessary tools. We
shall apply the method of paramultilication to decompose the product%
\begin{equation*}
f_{1}\cdot f_{2}\cdot ...\cdot f_{m}.
\end{equation*}%
Products in spaces with positive smoothness are given in subsection 3.1. In
Subection 3.2 the product in space with negative smoothness is given.

We will adopt the following convention throughout this paper. As usual, we
denote by $\mathbb{R}^{n}$ the $n$-dimensional real Euclidean space, $%
\mathbb{N}$ the collection of all natural numbers and $\mathbb{N}_{0}=%
\mathbb{N}\cup \{0\}$. For a multi-index $\alpha =(\alpha _{1},...,\alpha
_{n})\in \mathbb{N}_{0}^{n}$, we write $\left\vert \alpha \right\vert
=\alpha _{1}+...+\alpha _{n}$. The Euclidean scalar product of $%
x=(x_{1},...,x_{n})$ and $y=(y_{1},...,y_{n})$ is given by $x\cdot
y=x_{1}y_{1}+...+x_{n}y_{n}$. If $a\in \mathbb{R}$, then we put $a_{+}=\max
(0,a)$.

\noindent\ As usual $L_{p}(\mathbb{R}^{n})$ for $0<p\leq \infty $ stands for
the Lebesgue spaces on $\mathbb{R}^{n}$ for which 
\begin{equation*}
\big\|f\mid L_{p}(\mathbb{R}^{n})\big\|=\big\|f\big\|_{p}=\Big(\int_{\mathbb{%
R}^{n}}\left\vert f(x)\right\vert ^{p}dx\Big)^{1/p}<\infty ,\text{\quad }%
0<p<\infty
\end{equation*}%
and 
\begin{equation*}
\big\|f\mid L_{\infty }(\mathbb{R}^{n})\big\|=\big\|f\big\|_{\infty }=%
\underset{x\in \mathbb{R}^{n}}{\text{ess-sup}}\left\vert f(x)\right\vert
<\infty .
\end{equation*}

By $\mathcal{S}(\mathbb{R}^{n})$ we denote the Schwartz space of all
complex-valued, infinitely differentiable and rapidly decreasing functions
on $\mathbb{R}^{n}$ and by $\mathcal{S}^{\prime }(\mathbb{R}^{n})$ the dual
space of all tempered distributions on $\mathbb{R}^{n}$. We define the
Fourier transform of a function $f\in \mathcal{S}(\mathbb{R}^{n})$ by%
\begin{equation*}
\mathcal{F}(f)(\xi )=\overset{\wedge }{f}(\xi )=(2\pi )^{-n/2}\int_{\mathbb{R%
}^{n}}e^{-ix\cdot \xi }f(x)dx,\quad \xi \in \mathbb{R}^{n}.
\end{equation*}%
Its inverse is denoted by $\mathcal{F}^{-1}f$. Both $\mathcal{F}$ and $%
\mathcal{F}^{-1}$ are extended to the dual Schwartz space $\mathcal{S}%
^{\prime }\left( \mathbb{R}^{n}\right) $ in the usual way.

If $s\in \mathbb{R}\ $and $0<q\leq \infty $, then $\ell _{q}^{s}$ is the set
of all sequences $\{f_{j}\}_{j\in \mathbb{N}_{0}}$ of complex numbers such
that%
\begin{equation*}
\big\|\left\{ f_{j}\right\} _{j\in \mathbb{N}_{0}}\mid \ell _{q}^{s}\big\|=%
\Big(\sum_{j=0}^{\infty }2^{jsq}\left\vert f_{j}\right\vert ^{q}\Big)%
^{1/q}<\infty
\end{equation*}%
with the obvious modification if $q=\infty $. If $s=0$ then we shortly
denote $\ell _{q}^{0}$ by $\ell _{q}$.

If $0<p,q\leq \infty $, then the space $L_{p}(\ell _{q})$ (resp. $\ell
_{q}(L_{p})$) is the set of the sequences\ of functions $\{f_{j}\}_{j\in 
\mathbb{N}_{0}}$ such that%
\begin{equation*}
\big\|\{f_{j}\}_{j\in \mathbb{N}_{0}}\mid L_{p}(\ell _{q})\big\|=\Big\|\Big(%
\sum_{j=0}^{\infty }\left\vert f_{j}\right\vert ^{q}\Big)^{1/q}\Big\|%
_{p}<\infty ,\quad 0<p<\infty
\end{equation*}%
\begin{equation*}
\Big(\text{resp. }\big\|\{f_{j}\}_{j\in \mathbb{N}_{0}}\mid \ell _{q}(L_{p})%
\big\|=\Big(\sum_{j=0}^{\infty }\big\|f_{j}\big\|_{p}^{q}\Big)^{1/q}<\infty %
\Big),
\end{equation*}%
with the obvious modification if $q=\infty .$

Given two quasi-Banach spaces $X$ and $Y$, we write $X\hookrightarrow Y$ if $%
X\subset Y$ and the natural embedding of $X$ in $Y$ is continuous. We shall
use $c$ to denote positive constant which may differ at each appearance.

\section{Functions spaces}

In this section we present the Fourier analytical definition of Besov and
Triebel-Lizorkin spaces and recall their basic properties. Also, we present
some useful results we need along the paper. We begin by a specific
resolution of unity. Let $\Psi $\ be a function\ in $\mathcal{S}(\mathbb{R}%
^{n})$\ satisfying $\Psi (x)=1$\ for\ $\left\vert x\right\vert \leq 1$\ and\ 
$\Psi (x)=0$\ for\ $\left\vert x\right\vert \geq \tfrac{3}{2}$.\ We put $%
\varphi _{0}(x)=\Psi (x)$, $\varphi _{1}(x)=\Psi (x/2)-\Psi (x)$\ and 
\begin{equation*}
\varphi _{j}(x)=\varphi _{1}(2^{-j+1}x)\text{\quad for\quad }j=2,3,....
\end{equation*}%
Then we have supp $\varphi _{j}\subset \left\{ x\in \mathbb{R}%
^{n}:2^{j-1}\leq \left\vert x\right\vert \leq 3\cdot 2^{j-1}\right\} $, $%
\varphi _{j}\left( x\right) =1$\ for $3\cdot 2^{j-2}\leq \left\vert
x\right\vert \leq 2^{j}$\ and $\Psi (x)+\sum_{j=1}^{\infty }\varphi
_{j}(x)=1 $ for all $x\in \mathbb{R}^{n}$.\ The system of functions $\left\{
\varphi _{j}\right\} _{j\in \mathbb{N}_{0}}$\ is called a smooth dyadic
resolution of unity. We define the convolution operators $\Delta _{j}$\ by
the following: 
\begin{equation*}
\Delta _{j}f=\mathcal{F}^{-1}\varphi _{j}\ast f,\quad j\in \mathbb{N}\quad 
\text{and}\quad \Delta _{0}f=\mathcal{F}^{-1}\Psi \ast f,\quad f\in \mathcal{%
S}^{^{\prime }}(\mathbb{R}^{n}).
\end{equation*}%
Thus we obtain the Littlewood-Paley decomposition $f=\sum_{j=0}^{\infty
}\Delta _{j}f$ of all $f\in \mathcal{S}^{^{\prime }}(\mathbb{R}^{n})$ $($%
convergence in $\mathcal{S}^{^{\prime }}(\mathbb{R}^{n}))$.

We define the convolution operators $Q_{j}$, $j\in \mathbb{N}_{0}$ by the
following: 
\begin{equation*}
Q_{j}f=\mathcal{F}^{-1}\Psi _{j}\ast f,\quad j\in \mathbb{N}_{0},
\end{equation*}%
where $\Psi _{j}=\Psi (2^{-j}\cdot ),j\in \mathbb{N}_{0}$ and we see that 
\begin{equation*}
Q_{j}f=\sum_{k=0}^{j}\Delta _{k}f
\end{equation*}%
for any $j\in \mathbb{N}_{0}.$\vskip5pt

Now we present the definition and a summary of basic results for Besov and
Triebel-Lizorkin spaces. In the interest of brevity, we shall only develop
those aspects which are relevant for us in the sequel. More detailed
accounts can be found in, e.g., J. Peetre \cite{P}, T. Runst and W. Sickel, 
\cite{RS} and H. Triebel [16, 17, 18].

\begin{definition}
\label{def-Be-Tr-Li spaces}(i)\ Let\ $s\in \mathbb{R}$\ and $0<p,q\leq
\infty $. The Besov space\ $B_{p,q}^{s}$ is the collection of all $f\in 
\mathcal{S}^{^{\prime }}(\mathbb{R}^{n})$ such that 
\begin{equation*}
\big\|f\mid B_{p,q}^{s}\big\|=\big\|\{\Delta _{j}f\}_{j\in \mathbb{N}%
_{0}}\mid \ell _{q}^{s}\left( L_{p}\right) \big\|<\infty .
\end{equation*}%
(ii)\ Let $s\in \mathbb{R},0<p<\infty $\ and $0<q\leq \infty $. The
Triebel-Lizorkin space $F_{p,q}^{s}$ is the collection of all $f\in \mathcal{%
S}^{^{\prime }}(\mathbb{R}^{n})$\ such that%
\begin{equation*}
\big\|f\mid F_{p,q}^{s}\big\|=\big\|\{\Delta _{j}f\}_{j\in \mathbb{N}%
_{0}}\mid L_{p}\left( \ell _{q}^{s}\right) \big\|<\infty .
\end{equation*}
\end{definition}

Now we give some properties of $F_{p,q}^{s}$ and $B_{p,q}^{s}$ which are of
interest for us, see T. Runst and W. Sickel, \cite{RS} and H. Triebel [16,
17, 18].

\begin{lemma}
\label{embedding}$\mathrm{(i)}$\textit{\ The spaces }$B_{p,q}^{s}$\textit{\
and }$F_{p,q}^{s}$\textit{\ are quasi Banach spaces }$($\textit{Banach space
in the case}$\ p,q\geq 1)$ \textit{and in any case}%
\begin{equation*}
\mathcal{S}(\mathbb{R}^{n})\hookrightarrow B_{p,q}^{s}\hookrightarrow 
\mathcal{S}^{\prime }(\mathbb{R}^{n})\quad \text{and}\quad \mathcal{S}(%
\mathbb{R}^{n})\hookrightarrow F_{p,q}^{s}\hookrightarrow \mathcal{S}%
^{\prime }(\mathbb{R}^{n}).
\end{equation*}%
$\mathrm{(ii)}$\textit{\ Let }$s_{i}\in \mathbb{R},\ 0<p_{i}<\infty $\textit{%
\ }$($\textit{resp. }$0<p_{i}\leq \infty )$\textit{\ and }$0<q_{i}\leq
\infty $\textit{\ }$($\textit{with }$i=0,1).$\textit{\ If }$s_{0}>s_{1}$\ 
\textit{and}$\mathit{\ }p_{0}=p_{1}$,\textit{\ or }$s_{0}\geq s_{1}\ $%
\textit{and}$\ s_{0}-\tfrac{n}{p_{0}}=s_{1}-\tfrac{n}{p_{1}}$\textit{,}$%
\mathit{\ }(q_{0}\leq q_{1}$ \textit{for Besov space}$)$,\textit{\ then it
holds}%
\begin{equation*}
F_{p_{0},\,q_{0}}^{s_{0}}\hookrightarrow F_{p_{1},\,q_{1}}^{s_{1}}\quad (%
\text{\textit{resp. }}B_{p_{0},\,q_{0}}^{s_{0}}\hookrightarrow
B_{p_{1},\,q_{1}}^{s_{1}}).
\end{equation*}%
$\mathrm{(iii)}$\textit{\ Let }$s,s_{i}\in \mathbb{R},\ 0<p,p_{i}<\infty $%
\textit{\ and }$0<q,q_{i}\leq \infty $\textit{\ }$($\textit{with }$i=0,1),$%
\textit{\ such that }$s_{0}-\frac{n}{p_{0}}=s-\frac{n}{p}=s_{1}-\frac{n}{%
p_{1}}.$\textit{\ If }$s_{0}>s>s_{1}\ $\textit{and}$\ q_{0}\leq p\leq q_{1}$,%
\textit{\ or }$s_{0}=s=s_{1}$,$\ q_{0}\leq \min \left( p,q\right) \ $\textit{%
and}$\ q_{1}\geq \max \left( p,q\right) $,\textit{\ then it holds }%
\begin{equation*}
B_{p_{0},\,q_{0}}^{s_{0}}\hookrightarrow F_{p,\,q}^{s}\hookrightarrow
B_{p_{1},\,q_{1}}^{s_{1}},\quad \text{Franke-Jawerth\ embeddings.}
\end{equation*}
\end{lemma}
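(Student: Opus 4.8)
The plan is to treat the three parts separately, reducing as much as possible to the underlying mixed-norm sequence spaces $\ell_q^s(L_p)$ and $L_p(\ell_q^s)$. For (i), completeness is inherited from these spaces. Given a Cauchy sequence $(f^{(k)})$ in $B_{p,q}^s$, for each fixed $j$ the sequence $(\Delta_j f^{(k)})$ is Cauchy in $L_p$; I would assemble the $L_p$-limits into a candidate tempered distribution and then verify, via Fatou's lemma applied to the $\ell_q^s(L_p)$-quasi-norm, that it lies in $B_{p,q}^s$ with the correct mode of convergence. The quasi-triangle inequality (the genuine triangle inequality when $p,q\ge 1$) transfers verbatim from $\ell_q^s(L_p)$ to $B_{p,q}^s$, and the $F$-case is identical with $L_p(\ell_q^s)$ in place of $\ell_q^s(L_p)$. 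For the embedding $\mathcal{S}(\Rn)\hookrightarrow B_{p,q}^s$, I would exploit that $\widehat{\Delta_j f}=\varphi_j\hat f$ is supported in the dyadic annulus while $\hat f$ is rapidly decreasing, whence $\|\Delta_j f\|_p\lesssim 2^{-jN}$ for every $N$, giving summability of $\{2^{js}\|\Delta_j f\|_p\}_j$. For $B_{p,q}^s\hookrightarrow\mathcal{S}'(\Rn)$, I would estimate a pairing $\langle f,\psi\rangle$ with $\psi\in\mathcal{S}(\Rn)$ by writing $f=\sum_j\Delta_j f$, applying Hölder at each level, and summing against the decay forced by the smoothness of $\psi$.

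For (ii), the elementary embedding with $p_0=p_1$ reduces to the sequence inclusions $\ell_{q_0}^{s_0}\hookrightarrow\ell_{q_1}^{s_1}$, which hold whenever $s_0>s_1$ (for any $q_0,q_1$) or $s_0=s_1$ and $q_0\le q_1$; these are applied pointwise in $x$ inside $L_p$ for the $F$-case and after taking $L_p$-norms for the $B$-case. For the Sobolev-type case $s_0-\tfrac{n}{p_0}=s_1-\tfrac{n}{p_1}$ with $p_0\le p_1$, the decisive tool is the Nikolskii inequality: since $\Delta_j f$ has Fourier support in a ball of radius $\sim 2^j$, one has $\|\Delta_j f\|_{p_1}\lesssim 2^{jn(1/p_0-1/p_1)}\|\Delta_j f\|_{p_0}$. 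Because $n(1/p_0-1/p_1)=s_0-s_1$, this exactly converts $s_0$-weighted $L_{p_0}$-data into $s_1$-weighted $L_{p_1}$-data, and the embedding then follows from the same sequence inclusions as above.

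The genuinely hard part is (iii), the Franke--Jawerth embeddings. My plan is to prove the Jawerth half $B_{p_0,q_0}^{s_0}\hookrightarrow F_{p,q}^s$ (for $s_0>s$, $q_0\le p$) and then obtain the Franke half $F_{p,q}^s\hookrightarrow B_{p_1,q_1}^{s_1}$ by duality. Following the now-standard reduction to sequence spaces (in the spirit of Vyb\'{\i}ral), I would first pass, via an atomic or wavelet decomposition, from the function spaces to their discrete models $b_{p,q}^s$ and $f_{p,q}^s$ indexed by dyadic cubes, so that all the analytic content is isolated in the inclusion $b_{p_0,q_0}^{s_0}\hookrightarrow f_{p,q}^s$. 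The crucial point is that this is \emph{not} a pointwise inequality: one must exploit the gain $s_0-s=n(\tfrac{1}{p_0}-\tfrac{1}{p})$ to trade $\ell^{q_0}$-summability across scales for $L_p$-integrability in space, and the hypothesis $q_0\le p$ is precisely what makes this trade admissible.

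I expect the main obstacle to be exactly this discrete embedding. I would attack it with a distribution-function (good-$\lambda$) or maximal-function argument that controls, on each dyadic level, the measure of the set where the coefficients are large, summing the resulting level estimates against the weight $2^{j(s_0-s)}$. The boundary case $s_0=s=s_1$ requires the sharper hypotheses $q_0\le\min(p,q)$ and $q_1\ge\max(p,q)$; I would reach it from the strict-inequality case by real interpolation or a limiting argument. Finally, the Franke half follows from the Jawerth half by duality, using $(F_{p,q}^s)'=F_{p',q'}^{-s}$ and $(B_{p,q}^s)'=B_{p',q'}^{-s}$ in the reflexive range, together with a lifting argument to extend to the remaining parameters.
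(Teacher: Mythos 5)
The paper does not actually prove this lemma: it is stated as a collection of classical facts with a pointer to the monographs of Runst--Sickel and Triebel, so there is no internal proof to compare with. Judged on its own terms, your sketch is the standard one for (i) and for the Besov half of (ii), but it has a genuine gap in the Triebel--Lizorkin half of (ii). The Nikolskii inequality $\|\Delta_j f\|_{p_1}\lesssim 2^{jn(1/p_0-1/p_1)}\|\Delta_j f\|_{p_0}$ is a level-by-level $L_{p_0}\to L_{p_1}$ bound; it can be applied after the $\ell_{q}$-sum in the Besov quasi-norm, but it cannot be inserted inside the $L_{p_1}(\ell_{q_1})$ quasi-norm, where the integration in $x$ sits outside the sum over $j$. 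Note also that the $F$-statement carries no condition relating $q_0$ and $q_1$ (one may take $q_0=\infty$ and $q_1$ finite), which already shows that no argument reducing to the sequence inclusion $\ell_{q_0}^{s_0}\hookrightarrow\ell_{q_1}^{s_1}$ can suffice. The standard route is the reverse of yours: one first establishes the Franke--Jawerth embeddings of (iii) and then deduces the $F$-Sobolev embedding by inserting an intermediate Besov space on the same Sobolev line, $F^{s_0}_{p_0,q_0}\hookrightarrow B^{\sigma}_{r,p_0}\hookrightarrow F^{s_1}_{p_1,q_1}$ with $p_0<r<p_1$ and $\sigma-\tfrac{n}{r}=s_0-\tfrac{n}{p_0}$.

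Two further points on (iii). First, the duality derivation of the Franke half $F^{s}_{p,q}\hookrightarrow B^{s_1}_{p_1,q_1}$ only functions where the duals are again spaces of the stated type, i.e. for $1\le p<\infty$ and $1\le q\le\infty$; for $p<1$ or $q<1$ the duals are of a different nature, and lifting operators change $s$ but not $p$ or $q$, so they cannot recover the quasi-Banach range. Since the present paper uses the lemma precisely for $0<q\le\infty$ and small auxiliary integrability indices, that range cannot be discarded; the known proofs (Jawerth, Franke, and the sequence-space proof of Vyb\'{\i}ral whose reduction you invoke) treat both halves directly rather than by duality. Second, the boundary case $s_0=s=s_1$ forces $p_0=p=p_1$ and is simply the elementary inclusion $B^{s}_{p,\min(p,q)}\hookrightarrow F^{s}_{p,q}\hookrightarrow B^{s}_{p,\max(p,q)}$, which follows from the generalized Minkowski inequality; it is not obtained by a limiting argument from the strict case, and no interpolation is needed there.
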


As usual, by $\left\langle A_{0},A_{1}\right\rangle _{\theta }$ we denote
the result of the $\pm $-method of Gustaffson-Peetre applied to quasi-Banach
spaces $A_{0}$ and $A_{1}$. The next theorem was proved in \cite{FJ}.

\begin{theorem}
\label{interpolation}\textit{Let }$0<\theta <1,0<p_{0},p_{1}<\infty
,0<q_{0},q_{1}\leq \infty $\textit{\ and }$s_{0},s_{1}\in \mathbb{R}$\textit{%
. Then}%
\begin{equation*}
\left\langle
F_{p_{0},\,q_{0}}^{s_{0}},F_{p_{1},\,q_{1}}^{s_{1}}\right\rangle _{\theta
}=F_{p,\,q}^{s}
\end{equation*}%
\textit{provided that }$s=\left( 1-\theta \right) s_{0}+\theta s_{1}$\textit{%
, }$\frac{1}{p}=\frac{1-\theta }{p_{0}}+\frac{\theta }{p_{1}}$\textit{\ and }%
$\frac{1}{q}=\frac{1-\theta }{q_{0}}+\frac{\theta }{q_{1}}.$
\end{theorem}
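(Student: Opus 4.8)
The plan is to exploit the Littlewood--Paley description of the Triebel--Lizorkin spaces as retracts of mixed-norm sequence spaces, together with the fact that the $\pm$-method is an exact interpolation functor that commutes with retractions. First I would recall the retract structure: the analysis operator $S\colon f\mapsto\{\Delta_j f\}_{j\in\mathbb{N}_0}$ is bounded from $F_{p,q}^{s}$ into $L_p(\ell_q^s)$, and there is a bounded synthesis operator $R\colon\{f_j\}\mapsto\sum_j\widetilde{\Delta}_j f_j$, built from a slightly dilated resolution of unity so that $\widetilde{\Delta}_j\Delta_j=\Delta_j$, which satisfies $R\circ S=\mathrm{id}$ and maps $L_p(\ell_q^s)$ onto $F_{p,q}^s$. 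Crucially the operators $S$ and $R$ do not depend on $(s,p,q)$, so they act simultaneously on both endpoints of the couple.

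Because the $\pm$-method commutes with retractions, this reduction gives
\[
\langle F_{p_0,q_0}^{s_0}, F_{p_1,q_1}^{s_1}\rangle_\theta = R\big(\langle L_{p_0}(\ell_{q_0}^{s_0}), L_{p_1}(\ell_{q_1}^{s_1})\rangle_\theta\big),
\]
so the whole problem reduces to interpolating the couple of mixed-norm sequence spaces and applying $R$. Here the hypothesis $0<p_0,p_1<\infty$ (hence $p<\infty$) is essential: it is exactly what makes $F_{p,q}^{s}$ a retract of the genuine function lattice $L_p(\ell_q^s)$, whereas for $p=\infty$ the space $F_{\infty,q}^{s}$ is not described in this way and the reduction would break down.

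The second ingredient is the identification of the $\pm$-interpolation of the sequence couple with a Calder\'on--Lozanovskii product. I would invoke the theorem of Gustafsson--Peetre, in its quasi-Banach extension, stating that for a couple $(A_0,A_1)$ of quasi-Banach lattices of measurable functions over a common $\sigma$-finite measure space one has the exact identity $\langle A_0,A_1\rangle_\theta=A_0^{1-\theta}A_1^\theta$, where $A_0^{1-\theta}A_1^\theta$ consists of all $g$ with $|g|\le|g_0|^{1-\theta}|g_1|^\theta$ for some $g_i\in A_i$. Viewing $L_{p_i}(\ell_{q_i}^{s_i})$ as a lattice over $\mathbb{R}^n\times\mathbb{N}_0$ with Lebesgue-times-counting measure reduces the problem to a pointwise computation of a Calder\'on product. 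Using that this product commutes with the mixed-norm construction, namely
\[
\big(L_{p_0}(Y_0)\big)^{1-\theta}\big(L_{p_1}(Y_1)\big)^\theta=L_p\big(Y_0^{1-\theta}Y_1^\theta\big),\qquad\tfrac1p=\tfrac{1-\theta}{p_0}+\tfrac{\theta}{p_1},
\]
together with the elementary weighted identity $(\ell_{q_0}^{s_0})^{1-\theta}(\ell_{q_1}^{s_1})^\theta=\ell_q^s$ ---in which the weight appears as the geometric mean $(2^{js_0})^{1-\theta}(2^{js_1})^\theta=2^{js}$ and the exponent as $\tfrac1q=\tfrac{1-\theta}{q_0}+\tfrac{\theta}{q_1}$--- one obtains $\langle L_{p_0}(\ell_{q_0}^{s_0}),L_{p_1}(\ell_{q_1}^{s_1})\rangle_\theta=L_p(\ell_q^s)$. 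Applying $R$ then returns $F_{p,q}^{s}$.

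The main obstacle I anticipate is that the entire argument must be run in the quasi-Banach category, where the classical Gustafsson--Peetre theory requires adaptation. One has to verify that the lattices $L_{p_i}(\ell_{q_i}^{s_i})$ satisfy the structural hypotheses under which the $\pm$-method equals the Calder\'on product (for instance the Fatou property and closedness of the product), and that the synthesis and analysis operators stay uniformly bounded when $\min(p_i,q_i)<1$, so that maximal-function and convolution estimates must replace the naive triangle inequality in establishing the retract. I expect the careful bookkeeping of these quasi-Banach technicalities, rather than any single deep new idea, to be where the real work lies.
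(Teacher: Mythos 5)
The paper offers no proof of this theorem: it is quoted verbatim from Frazier and Jawerth \cite{FJ}. Your outline --- realizing $F^{s}_{p,q}$ as a retract of $L_p(\ell_q^s)$ via analysis/synthesis operators, transporting the $\pm$-method through the retraction, and identifying the interpolated sequence couple with a Calder\'on product $L_{p_0}(\ell_{q_0}^{s_0})^{1-\theta}L_{p_1}(\ell_{q_1}^{s_1})^{\theta}=L_p(\ell_q^s)$ --- is precisely the strategy carried out in that reference, and the quasi-Banach lattice hypotheses you flag as the real work are exactly the ones verified there.
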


\begin{remark}
\label{interpolation1}The $\pm $-method has the so called interpolation
property$\mathrm{.}$ Denote by $\mathcal{L}(A,B)$ the set of all bounded
linear operators from $A$ to $B$. Then this means that if $T\in \mathcal{L}%
(A_{i},B_{i})$, $i=1,2$, we have $T$ maps $\left\langle
A_{0},B_{0}\right\rangle _{\theta }$ into $\left\langle
A_{1},B_{1}\right\rangle _{\theta }$ and 
\begin{equation*}
\big\|T\mid \left\langle A_{0},B_{0}\right\rangle _{\theta }\longrightarrow
\left\langle A_{1},B_{1}\right\rangle _{\theta }\big\|\leq \big\|T\mid
A_{0}\longrightarrow A_{1}\big\|^{1-\theta }\big\|T\mid A_{0}\longrightarrow
A_{1}\big\|^{\theta },
\end{equation*}%
which plays a crucial role in this paper.
\end{remark}

Now we recall some results which are useful for us. The next lemma is a
Hardy-type inequality which is easy to prove.

\begin{lemma}
\label{Hardying}\textit{Let }$0<\gamma <1$\textit{\ and }$0<q\leq \infty $%
\textit{. Let }$\left\{ \varepsilon _{k}\right\} _{k\in \mathbb{N}_{0}}$%
\textit{\ be a sequence of positive real numbers},\textit{\ such that }%
\begin{equation*}
\big\|\{\varepsilon _{k}\}_{k\in \mathbb{N}_{0}}\mid \ell _{q}\big\|%
=A<\infty .
\end{equation*}%
\textit{Then the sequence }$\delta _{k}=\sum_{j=0}^{k}\gamma
^{k-j}\varepsilon _{j}$, $k\in \mathbb{N}_{0}$ \textit{belong to }$\ell _{q}$%
\textit{,\ and the estimate}%
\begin{equation*}
\big\|\{\delta _{k}\}_{k\in \mathbb{N}_{0}}\mid \ell _{q}\big\|\leq c\text{ }%
A
\end{equation*}%
\textit{holds. The constant }$c\ $\textit{depends only on }$\gamma $\textit{%
\ and }$q$\textit{.}
\end{lemma}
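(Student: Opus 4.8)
The plan is to recognize $\{\delta_k\}$ as the discrete convolution of $\{\varepsilon_k\}$ with the geometric sequence $\{\gamma^i\}_{i\in\mathbb{N}_0}$, and then apply the appropriate convolution estimate according to whether $q\ge 1$ or $q<1$. Setting $i=k-j$ and extending $\varepsilon$ by zero on negative indices, I would first write
\[
\delta_k=\sum_{j=0}^k\gamma^{k-j}\varepsilon_j=\sum_{i=0}^\infty\gamma^i\varepsilon_{k-i},\qquad k\in\mathbb{N}_0,
\]
so that $\{\delta_k\}$ is the convolution of $\{\gamma^i\}$ and $\{\varepsilon_k\}$, with $\big\|\{\gamma^i\}\mid\ell_1\big\|=\sum_{i=0}^\infty\gamma^i=\frac{1}{1-\gamma}$, which is finite since $0<\gamma<1$.

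For $1\le q\le\infty$ I would invoke Young's convolution inequality on $\mathbb{N}_0$ (equivalently, Minkowski's inequality in $\ell_q$ applied termwise in the index $i$), which gives
\[
\big\|\{\delta_k\}\mid\ell_q\big\|\le\big\|\{\gamma^i\}\mid\ell_1\big\|\,\big\|\{\varepsilon_k\}\mid\ell_q\big\|=\frac{A}{1-\gamma}.
\]
The endpoint $q=\infty$ is the same estimate, obtained directly by bounding $\delta_k\le A\sum_i\gamma^i$.

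The remaining range $0<q<1$ is the only place requiring care, since there the triangle inequality in $\ell_q$ is unavailable. Instead I would use the elementary inequality $\big(\sum_j a_j\big)^q\le\sum_j a_j^q$, valid for nonnegative $a_j$ whenever $0<q\le 1$. Applying it with $a_j=\gamma^{k-j}\varepsilon_j$, summing over $k$, and interchanging the order of summation (justified by Tonelli, all terms being nonnegative) yields
\[
\sum_{k=0}^\infty\delta_k^q\le\sum_{k=0}^\infty\sum_{j=0}^k\gamma^{(k-j)q}\varepsilon_j^q=\sum_{j=0}^\infty\varepsilon_j^q\sum_{i=0}^\infty\gamma^{iq}=\frac{1}{1-\gamma^q}\sum_{j=0}^\infty\varepsilon_j^q=\frac{A^q}{1-\gamma^q},
\]
and taking $q$-th roots gives $\big\|\{\delta_k\}\mid\ell_q\big\|\le(1-\gamma^q)^{-1/q}A$. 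In both cases the constant depends only on $\gamma$ and $q$, which is the assertion.

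I expect no serious obstacle here; the statement is genuinely elementary. The single subtle point is that for $q<1$ one must replace Minkowski/Young by the $q$-subadditivity of $t\mapsto t^q$ together with the Tonelli interchange of the double sum, after which the geometric series sums explicitly and produces the stated bound.
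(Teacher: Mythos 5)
Your proof is correct and complete; the paper itself gives no proof of this lemma, remarking only that it ``is easy to prove,'' and your argument (Young/Minkowski for $1\le q\le\infty$, and $q$-subadditivity of $t\mapsto t^{q}$ together with Tonelli and the geometric series for $0<q<1$) is exactly the standard one the author presumably had in mind. The case split at $q=1$ and the explicit constants $\frac{1}{1-\gamma}$ and $(1-\gamma^{q})^{-1/q}$ are all in order, so there is nothing to add.
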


\begin{lemma}
\label{ortholemma1}\textit{Let }$\gamma >0$. \textit{For any sequence }$%
\{f_{j}\}_{j\in \mathbb{N}_{0}}$\textit{\ of functions such that }$\mathrm{%
supp}\overset{\wedge }{f_{j}}\subset \{\xi \in \mathbb{R}^{n}:\gamma
^{-1}2^{j}\leq \left\vert \xi \right\vert \leq \gamma 2^{j}\}$\textit{\ we
have}%
\begin{equation*}
\Big\|\sum_{j=0}^{\infty }f_{j}\mid F_{p,q}^{s}\Big\|\leq c\big\|%
\{2^{js}f_{j}\}_{j\in \mathbb{N}_{0}}\mid L_{p}(\ell _{q})\big\|\quad \text{%
if}\quad p<\infty
\end{equation*}%
\textit{and }%
\begin{equation}
\Big\|\sum_{j=0}^{\infty }f_{j}\mid B_{p,q}^{s}\Big\|\leq c\big\|%
\{2^{js}f_{j}\}_{j\in \mathbb{N}_{0}}\mid \ell _{q}(L_{p})\big\|.
\label{estimate1}
\end{equation}%
\textit{The constant }$c$\textit{\ depends on }$s,n,p$\textit{\ and }$\gamma 
$\textit{.}
\end{lemma}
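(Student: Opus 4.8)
The plan is to reduce everything to a finite-overlap argument combined with a maximal-function estimate. First I would record the key geometric fact: since $\spt\widehat{f_j}\subset\{\gamma^{-1}2^{j}\le|\xi|\le\gamma 2^{j}\}$ while $\spt\varphi_k$ lies in the dyadic annulus $\{2^{k-1}\le|\xi|\le 3\cdot 2^{k-1}\}$ for $k\ge 1$ (and $\spt\varphi_0\subset\{|\xi|\le 3/2\}$), the product $\varphi_k\widehat{f_j}$ vanishes unless these two annuli meet. Comparing radii shows this forces $|j-k|\le N$ for some integer $N=N(\gamma)$. Consequently
\[
\Delta_k\Big(\sum_{j=0}^{\infty}f_j\Big)=\sum_{\substack{j\ge 0\\ |j-k|\le N}}\Delta_k f_j ,
\]
so only boundedly many terms survive in each Littlewood--Paley block, and $2^{ks}\approx 2^{js}$ on the surviving range.

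Second, I would establish a pointwise bound for each surviving piece by the Peetre maximal function. Writing $\Delta_k f_j=(\mathcal{F}^{-1}\varphi_k)\ast f_j$ and using that $\mathcal{F}^{-1}\varphi_k$ is a Schwartz kernel concentrated at scale $2^{-k}\approx 2^{-j}$, one obtains
\[
|\Delta_k f_j(x)|\le c\,f_j^{*}(x),\qquad f_j^{*}(x):=\sup_{y\in\Rn}\frac{|f_j(x-y)|}{(1+2^{j}|y|)^{a}},
\]
where $a$ is a parameter at our disposal. I would then invoke the standard estimate $f_j^{*}(x)\le c\,\big(M(|f_j|^{t})(x)\big)^{1/t}$, valid whenever $a>n/t$, with $M$ the Hardy--Littlewood maximal operator; here I fix $0<t<\min(p,q)$ and choose $a>n/t$.

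Third, I would assemble the $F$-norm. Combining the finite overlap with the two previous displays,
\[
\Big\|\sum_j f_j\mid F_{p,q}^{s}\Big\|\le c\,\Big\|\Big(\sum_{j}2^{jsq}\big(M(|f_j|^{t})\big)^{q/t}\Big)^{1/q}\Big\|_{p}.
\]
Setting $h_j=2^{jst}|f_j|^{t}$ and using the homogeneity of $M$, the right-hand side equals $\big\|\big(\sum_j (Mh_j)^{q/t}\big)^{t/q}\big\|_{p/t}^{1/t}$, to which the Fefferman--Stein vector-valued maximal inequality applies since $p/t>1$ and $q/t>1$. Undoing the substitution returns $c\,\|\{2^{js}f_j\}\mid L_p(\ell_q)\|$, which is the claimed $F$-estimate.

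Finally, for the Besov estimate I would argue block by block in $L_p$ rather than with a vector-valued inequality. Taking $L_p$ norms in the pointwise bound of the second step and using boundedness of $M$ on $L^{p/t}$ (again $p/t>1$) gives $\|\Delta_k f_j\|_p\le c\,\|f_j\|_p$; summing the finitely many surviving terms in each block and using $2^{ks}\approx 2^{js}$ yields the bound $c\,\|\{2^{js}f_j\}\mid\ell_q(L_p)\|$. The main obstacle throughout is precisely the range $p<1$ or $q<1$, where neither Young's convolution inequality nor the Fefferman--Stein inequality is available directly; passing to the Peetre maximal function and lowering the exponent to $t<\min(p,q)$ is what circumvents this, at the cost of verifying the uniform kernel moment bound $\int|\mathcal{F}^{-1}\varphi_k(y)|(1+2^{j}|y|)^{a}\,dy\le c$ over the finite range $|j-k|\le N$.
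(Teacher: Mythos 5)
Your proof is correct and complete in its essentials: the finite overlap $|j-k|\le N(\gamma)$ of the Fourier supports, the pointwise domination of $\Delta_k f_j$ by the Peetre maximal function with $t<\min(p,q)$ and $a>n/t$, and the Fefferman--Stein vector-valued maximal inequality (resp.\ scalar $L^{p/t}$-boundedness of $M$ for the Besov case) are exactly the right ingredients, and the lowering of the exponent to $t<\min(p,q)$ is indeed what rescues the range $p<1$ or $q<1$. The paper does not prove this lemma itself but only cites \cite{RS}, and your argument is precisely the standard proof given there, so there is nothing to add beyond the kernel moment bound you already flag as the remaining routine verification.
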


\begin{lemma}
\label{ortholemma2}\textit{Let }$\gamma >1,0<p,q\leq \infty $\textit{\ and }$%
s>n(\frac{1}{p}-1)_{+}$\textit{. For any sequence }$\{f_{j}\}_{j\in \mathbb{N%
}_{0}}$\textit{\ of functions such that }$\mathrm{supp}\overset{\wedge }{%
f_{j}}\subset \{\xi \in \mathbb{R}^{n}:\left\vert \xi \right\vert \leq
\gamma 2^{j}\}$\textit{. Then }$\mathrm{\eqref{estimate1}}$\textit{\ remains
true.}
\end{lemma}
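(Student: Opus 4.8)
Set $f=\sum_{j\ge 0}f_j$. The plan is to estimate each Littlewood--Paley block $\Delta_l f$ and then perform a weighted summation. Since $\mathrm{supp}\,\widehat{f_j}\subset\{|\xi|\le\gamma 2^j\}$ while $\mathrm{supp}\,\varphi_l\subset\{2^{l-1}\le|\xi|\le 3\cdot 2^{l-1}\}$ for $l\ge1$, the block $\Delta_l f_j=\mathcal F^{-1}\varphi_l\ast f_j$ vanishes unless these two sets meet, i.e.\ unless $l\le j+N$, where $N=N(\gamma)\in\mathbb N$ is fixed by $2^{N}\ge 2\gamma$. Hence $\Delta_l f=\sum_{j\ge(l-N)_+}\Delta_l f_j$, and since $\|f\mid B_{p,q}^{s}\|=\big(\sum_{l}2^{lsq}\|\Delta_l f\|_p^{q}\big)^{1/q}$ by definition, everything reduces to a weighted summation of the numbers $\|\Delta_l f_j\|_p$.

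The heart of the proof is the single-block estimate
\[
\|\Delta_l f_j\|_p\le c\,2^{(j-l)_+\sigma_p}\,\|f_j\|_p,\qquad \sigma_p:=n\big(\tfrac1p-1\big)_+,
\]
for all $l\le j+N$. For $p\ge1$ we have $\sigma_p=0$ and this is Young's inequality, since $\|\mathcal F^{-1}\varphi_l\|_1$ does not depend on $l$. The case $0<p<1$ is the main obstacle. Here the Fourier support of $f_j$, of radius $\sim 2^j$, is far larger than the scale $2^l$ of the convolution kernel once $l\ll j$, so Young's inequality is unavailable; moreover the Peetre maximal function only delivers the wasteful exponent $2^{(j-l)a}$ with $a>n/p>\sigma_p$, which is too lossy. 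To obtain the sharp exponent $\sigma_p$ I would argue by sampling (Plancherel--Polya--Nikolskii): both $f_j$ and $\Delta_l f_j$ have Fourier support in $\{|\xi|\le\gamma 2^j\}$, so their $L_p$ quasi-norms are equivalent to the $\ell_p$-norms of their values on the lattice $2^{-j}\mathbb Z^n$; writing the convolution on this lattice as a discrete convolution and applying the $\ell_p$-Young inequality ($p<1$), the operative constant is the $\ell_p$-norm of the sampled kernel $\{2^{-jn}\mathcal F^{-1}\varphi_l(2^{-j}m)\}_{m}$, whose explicit computation (after the mesh normalisations of $f_j$ and $\Delta_l f_j$ cancel) yields precisely the factor $2^{(j-l)\sigma_p}$.

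Granting this estimate, the conclusion is routine. Put $a_j=2^{js}\|f_j\|_p$, $b_l=2^{ls}\|\Delta_l f\|_p$ and $u=\min(1,p)$. The $u$-triangle inequality for $\|\cdot\|_p$ together with the single-block estimate gives
\[
b_l^{\,u}\le c\sum_{j\ge(l-N)_+}W(j-l)\,a_j^{\,u},\qquad
W(m)=\begin{cases}2^{-m(s-\sigma_p)u}, & m\ge0,\\ 2^{-msu}, & -N\le m<0.\end{cases}
\]
Because $s>\sigma_p$ by hypothesis, the kernel $W$ belongs to $\ell_r$ for every $r>0$ (the part $m\ge0$ is a convergent geometric series, the part $m<0$ is a finite sum). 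A discrete Young inequality when $q/u\ge1$, or the sub-additivity of $t\mapsto t^{q/u}$ when $q/u<1$ --- this being the two-sided analogue of the Hardy-type Lemma \ref{Hardying} --- then gives $\|\{b_l\}\mid\ell_q\|\le c\,\|\{a_j\}\mid\ell_q\|$, which is exactly \eqref{estimate1}. It is precisely here that the assumption $s>\sigma_p=n(\tfrac1p-1)_+$ is indispensable: it is what makes $s-\sigma_p>0$, and hence the summation kernel summable.
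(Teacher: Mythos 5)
The paper does not actually prove this lemma: it delegates it to Marschall \cite[Lemma 3]{M2} (just as Lemma \ref{ortholemma1} is delegated to \cite{RS}). Your argument is, in substance, the standard proof that sits behind that citation, and it is correct. The three steps — the observation that $\Delta_l f=\sum_{j\ge (l-N)_+}\Delta_l f_j$ with $2^N\ge 2\gamma$, the single-block bound $\|\Delta_l f_j\|_p\le c\,2^{(j-l)_+\sigma_p}\|f_j\|_p$ with $\sigma_p=n(\tfrac1p-1)_+$, and the concluding weighted summation with a kernel $W$ that is summable precisely because $s>\sigma_p$ — are exactly the standard ones, and your handling of the summation (the $u$-triangle inequality, then discrete Young for $q/u\ge1$ versus subadditivity of $t\mapsto t^{q/u}$ for $q/u<1$) is right. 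The one remark worth making is that your single-block estimate for $p<1$ is nothing other than the known convolution inequality for band-limited functions, $\|g\ast f\|_p\le c\,R^{n(1/p-1)}\|g\|_p\|f\|_p$ when both Fourier transforms are supported in $\{|\xi|\le R\}$ (Triebel, \emph{Theory of function spaces}, Section 1.5.3 — the same source the paper invokes for Lemma \ref{Young'sing}); applying it with $g=\mathcal F^{-1}\varphi_l$, $R=\gamma 2^{j}$ and $\|\mathcal F^{-1}\varphi_l\|_p=c\,2^{-ln(1/p-1)}$ gives your exponent at once, so you could cite it rather than re-derive it.

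Two details in your derivation should be tightened, though neither is an obstruction. First, the identity ``$\Delta_l f_j$ restricted to the lattice is the discrete convolution of the sampled kernel with the sampled $f_j$'' is not automatic: it follows from Poisson summation only when the $\tfrac{2\pi}{h}$-periodized translates of $\widehat{f_j}$ miss $\supp\varphi_l$, which forces a mesh $h=c(\gamma)2^{-j}$ with a small constant rather than $h=2^{-j}$ itself, and you then need the two-sided Plancherel--Polya equivalence $\|f(h\cdot)\mid\ell_p\|\sim h^{-n/p}\|f\|_p$ at that mesh. Second, the lemma implicitly asserts that $\sum_j f_j$ converges in $\mathcal S'(\mathbb R^n)$ when the right-hand side of \eqref{estimate1} is finite; you interchange $\Delta_l$ with the infinite sum at the outset, so you should note that your block estimates make the partial sums Cauchy in, say, $B^{s-\varepsilon}_{p,\infty}$ for small $\varepsilon>0$, which justifies both the convergence and the interchange.
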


\begin{lemma}
\label{Young'sing}\textit{Let }$0<p\leq q\leq \infty $\textit{\ and }$\gamma
>0$\textit{. Then there exists a constant }$c=c(n,p,q)>0$\textit{\ such that
for all }$f\in L_{p}$\textit{\ with }$\mathrm{supp}\widehat{f}\subset \{\xi
\in \mathbb{R}^{n}:\left\vert \xi \right\vert \leq \gamma \}$\textit{, one
has }%
\begin{equation*}
\big\|f\big\|_{q}\leq c\,\gamma ^{n(\frac{1}{p}-\frac{1}{q})}\big\|f\big\|%
_{p}.
\end{equation*}
\end{lemma}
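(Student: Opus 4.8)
The plan is to reduce the assertion, by two elementary reductions, to the single endpoint estimate $\|f\|_{\infty}\le c\,\|f\|_{p}$ for functions whose spectrum lies in the \emph{unit} ball. First I would remove $\gamma$ by dilation: if $\widehat f$ is supported in $\{|\xi|\le\gamma\}$, then $g(x):=f(x/\gamma)$ satisfies $\widehat g(\xi)=\gamma^{n}\widehat f(\gamma\xi)$, so $\operatorname{supp}\widehat g\subset\{|\xi|\le 1\}$, while $\|g\|_{r}=\gamma^{n/r}\|f\|_{r}$ for every $0<r\le\infty$. Thus a unit-ball inequality $\|g\|_{q}\le c\|g\|_{p}$ transfers into $\gamma^{n/q}\|f\|_{q}\le c\,\gamma^{n/p}\|f\|_{p}$, which is exactly the factor $\gamma^{n(1/p-1/q)}$ claimed. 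Second, I would reduce a general exponent $q\in[p,\infty)$ to the case $q=\infty$ by the pointwise interpolation $\|f\|_{q}^{q}=\int|f|^{q-p}|f|^{p}\le\|f\|_{\infty}^{\,q-p}\|f\|_{p}^{p}$ (valid since $q\ge p$); inserting the endpoint bound $\|f\|_{\infty}\le c\|f\|_{p}$ and taking a $q$-th root gives $\|f\|_{q}\le c\|f\|_{p}$, because $(q-p)/(pq)=1/p-1/q$. Hence everything comes down to proving $\|f\|_{\infty}\le c\|f\|_{p}$ when $\operatorname{supp}\widehat f\subset\{|\xi|\le 1\}$ and $0<p\le\infty$.

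For the endpoint I would use a reproducing formula. Since $\Psi\equiv 1$ on $\{|\xi|\le 1\}$, the support condition yields $\widehat f=\widehat f\,\Psi$, so that $f=f\ast\Phi$ with $\Phi:=c_{n}\,\mathcal F^{-1}\Psi\in\mathcal S(\mathbb R^{n})$. When $p\ge 1$ this is immediate: by Hölder's inequality (Young with the conjugate exponent) $\|f\|_{\infty}=\|f\ast\Phi\|_{\infty}\le\|f\|_{p}\,\|\Phi\|_{p'}$, and $\|\Phi\|_{p'}$ is a finite constant depending only on $n$ and $p$.

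The only real difficulty is the quasi-Banach range $0<p<1$, where Young's inequality $\|f\ast\Phi\|_{\infty}\le\|f\|_{p}\|\Phi\|_{p'}$ is unavailable since then $p'<0$; I expect this to be the main obstacle. Here I would exploit that band-limited functions are smooth with controlled derivatives: differentiating $f=f\ast\Phi$ gives $\nabla f=f\ast\nabla\Phi$, whence $\|\nabla f\|_{\infty}\le\|f\|_{\infty}\|\nabla\Phi\|_{1}\le c\|f\|_{\infty}$, i.e. $f$ is Lipschitz with constant $c\|f\|_{\infty}$. Choosing $x_{0}$ with $|f(x_{0})|\ge\tfrac12\|f\|_{\infty}$, the Lipschitz bound forces $|f(x)|\ge\tfrac14\|f\|_{\infty}$ on the ball $B(x_{0},r_{0})$ with $r_{0}:=(4c)^{-1}$ a \emph{fixed} radius; integrating the $p$-th power over this ball gives $\|f\|_{p}^{p}\ge c_{n}\,r_{0}^{\,n}\,(\tfrac14\|f\|_{\infty})^{p}$, that is $\|f\|_{\infty}\le c\|f\|_{p}$ with $c=c(n,p)$. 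The one point needing care is that this argument presupposes $\|f\|_{\infty}<\infty$; I would justify it either by first running the estimate on regularizations $f\ast\phi_{\varepsilon}$ (whose spectra remain inside the unit ball) and passing to the limit, or by invoking that a band-limited $L_{p}$ function is continuous and vanishes at infinity, so that its supremum is attained and finite. Once the endpoint is secured, the two reductions of the first paragraph close the proof.
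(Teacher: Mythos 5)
The paper does not prove this lemma at all: it is the classical Plancherel--P\'olya--Nikol'skii inequality and is quoted from Triebel \cite[Section 1.3.2]{T3}, so your argument can only be measured against the standard proofs. Your two reductions (dilation to $\gamma=1$, and $\|f\|_q^q\le\|f\|_\infty^{q-p}\|f\|_p^p$ to pass from $q$ to the endpoint $q=\infty$) are correct, as is the case $p\ge 1$ via $f=f\ast\Phi$ and H\"older. The Lipschitz-plus-ball argument for $0<p<1$ is also a legitimate and well-known device, and you have correctly located the only delicate point yourself.

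The genuine gap is precisely that point, and neither of your two proposed repairs closes it. The bound $\|\nabla f\|_\infty\le\|\nabla\Phi\|_1\|f\|_\infty$ is vacuous unless $\|f\|_\infty<\infty$ is known beforehand, and for $0<p<1$ that finiteness is essentially the assertion being proved. Your second fix (``a band-limited $L_p$ function is continuous and vanishes at infinity'') is circular in this range: Paley--Wiener--Schwartz only gives that $f$ is smooth with \emph{polynomial} growth, and upgrading polynomial growth plus $f\in L_p$, $p<1$, to boundedness is exactly the content of the endpoint estimate. Your first fix (mollification) does not help either: $f\ast\phi_\varepsilon$ is again only smooth of polynomial growth, so its sup-norm is no more obviously finite than that of $f$, and nothing in the limiting procedure supplies the missing uniform bound. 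Two standard ways to make your scheme rigorous are: (a) the complex-analytic Plancherel--P\'olya route, where $f$ extends to an entire function of exponential type, $|f|^p$ is plurisubharmonic for every $p>0$, and the sub-mean-value inequality gives the pointwise bound $|f(x)|^p\le c\int_{|y-x|\le 1}|f(y)|^p\,dy\le c\|f\|_p^p$ with no a priori boundedness assumption; or (b) Peetre's maximal-function argument, in which one first checks that $\sup_z|f(x-z)|(1+|z|)^{-N}$ is finite for $N$ large (using the polynomial growth) and then runs an absorption argument to bring $N$ down to $n/p$. Without one of these (or an equivalent substitute), the case $0<p<1$ of your proof is incomplete; everything else is fine.
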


For Lemma \ref{ortholemma1}, we can see \cite{RS}, while the proof of Lemma %
\ref{ortholemma2} is given in \cite[Lemma 3]{M2}. For the proof of Lemma \ref%
{Young'sing}, see \cite[Section 1.3.2]{T3}.

\begin{lemma}
\label{estimates}\textit{Let }$s\in \mathbb{R}\ $and $0<p<\infty $\textit{.}%
\newline
$\mathrm{(i)}$ \textit{We have}%
\begin{equation*}
\big\|\sup_{j\in \mathbb{N}}\left\vert Q_{j}f\right\vert \big\|_{p}\leq c%
\big\|f\mid F_{p,2}^{0}\big\|,
\end{equation*}%
\textit{for all} $f\in F_{p,2}^{0}$.

\noindent $\mathrm{(ii)}$\textit{\ Let }$j\in \mathbb{N}_{0}$\textit{\ and }$%
f\in B_{p,\infty }^{s}$. Then there exists a positive constant $c$,
independent of $j$, such that%
\begin{equation*}
\big\|Q_{j}f\big\|_{p}\leq c\text{ }\varepsilon _{j}\big\|f\mid B_{p,\infty
}^{s}\big\|,
\end{equation*}%
\textit{where }%
\begin{equation*}
\varepsilon _{j}=\left\{ 
\begin{array}{ccc}
2^{-js}, & \text{if} & s<0, \\ 
1, & \text{if} & s>0, \\ 
(j+1)^{\frac{1}{\min (1,p)}}, & \text{if} & s=0.%
\end{array}%
\right.
\end{equation*}%
$\mathrm{(iii)}$\textit{\ Let }$0<p\leq t\leq \infty ,j\in \mathbb{N}_{0}$%
\textit{\ and }$f\in B_{p,\infty }^{s}$. \textit{Then}%
\begin{equation*}
\big\|\Delta _{j}f\big\|_{t}\leq c\text{ }2^{(\frac{n}{p}-\frac{n}{t}-s)j}%
\big\|f\mid B_{p,\infty }^{s}\big\|,
\end{equation*}%
where the positive constant $c$ is independent of $j$.

\noindent $\mathrm{(iv)}$\textit{\ We have}%
\begin{equation*}
\big\|Q_{j}f\big\|_{t}\leq c\text{ }\varepsilon _{j}\big\|f\mid B_{p,\infty
}^{s}\big\|,
\end{equation*}%
\textit{for all }$j\in \mathbb{N}_{0}$\textit{, all }$f\in B_{p,\infty }^{s}$
\textit{and all }$p<t\leq \frac{1}{(\frac{1}{p}-\frac{s}{n})_{+}}$\textit{,
where }%
\begin{equation*}
\varepsilon _{j}=\left\{ 
\begin{array}{ccc}
1, & \text{if} & p<t<\frac{1}{(\frac{1}{p}-\frac{s}{n})_{+}}, \\ 
(j+1)^{\frac{1}{\min (1,t)}}, & \text{if} & t=\frac{1}{(\frac{1}{p}-\frac{s}{%
n})_{+}}%
\end{array}%
\right.
\end{equation*}%
and the positive constant $c$ is independent of $j$.
\end{lemma}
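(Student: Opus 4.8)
The plan is to treat part (i) by Hardy-space theory and the remaining three parts by elementary Fourier-support arguments. Throughout I write $Q_{j}f=\sum_{k=0}^{j}\Delta_{k}f=\mathcal{F}^{-1}\Psi_{j}\ast f$ and recall that $\mathrm{supp}\,\widehat{\Delta_{k}f}\subset\{2^{k-1}\le|\xi|\le 3\cdot 2^{k-1}\}$, while $\mathrm{supp}\,\widehat{Q_{j}f}\subset\{|\xi|\le \tfrac{3}{2}2^{j}\}$. The only genuinely hard point is (i); the estimates (ii)--(iv) all reduce to the single-block bound $\|\Delta_{k}f\|_{p}\le 2^{-ks}\|f\mid B_{p,\infty}^{s}\|$ (immediate from the definition of the Besov norm) combined with Lemma~\ref{Young'sing} and the summation of a geometric series.

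For (ii) I would expand $Q_{j}f=\sum_{k=0}^{j}\Delta_{k}f$ and use the quasi-triangle inequality in $L_{p}$, namely $\|Q_{j}f\|_{p}^{\min(1,p)}\le\sum_{k=0}^{j}\|\Delta_{k}f\|_{p}^{\min(1,p)}$ for all $0<p<\infty$. Inserting $\|\Delta_{k}f\|_{p}\le 2^{-ks}\|f\mid B_{p,\infty}^{s}\|$ leaves the geometric sum $\sum_{k=0}^{j}2^{-ks\min(1,p)}$, which is bounded by a constant if $s>0$, comparable to $2^{-js\min(1,p)}$ if $s<0$, and equal to $j+1$ if $s=0$; taking $\min(1,p)$-th roots reproduces exactly the three cases of $\varepsilon_{j}$. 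For (iii) the Fourier transform of $\Delta_{j}f$ is supported in a ball of radius $\le 2^{j+1}$, so Lemma~\ref{Young'sing} applied with the pair $(p,t)$, $0<p\le t\le\infty$, gives $\|\Delta_{j}f\|_{t}\le c\,2^{jn(\frac{1}{p}-\frac{1}{t})}\|\Delta_{j}f\|_{p}$, and one more use of $\|\Delta_{j}f\|_{p}\le 2^{-js}\|f\mid B_{p,\infty}^{s}\|$ yields the claimed factor $2^{j(\frac{n}{p}-\frac{n}{t}-s)}$.

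Part (iv) combines (iii) with summation in the $L_{t}$-norm: the quasi-triangle inequality gives $\|Q_{j}f\|_{t}^{\min(1,t)}\le c\,\|f\mid B_{p,\infty}^{s}\|^{\min(1,t)}\sum_{k=0}^{j}2^{k\min(1,t)(\frac{n}{p}-\frac{n}{t}-s)}$. The constraint $p<t\le\frac{1}{(\frac{1}{p}-\frac{s}{n})_{+}}$ is precisely the statement that the exponent $\frac{n}{p}-\frac{n}{t}-s$ is $\le 0$, with strict inequality exactly when $t<\frac{1}{(\frac{1}{p}-\frac{s}{n})_{+}}$. Hence the geometric sum is bounded uniformly in $j$ in the strict case, giving $\varepsilon_{j}=1$, and equals $j+1$ at the endpoint, where taking $\min(1,t)$-th roots produces $\varepsilon_{j}=(j+1)^{1/\min(1,t)}$; the degenerate endpoint $t=\infty$, which occurs when $s\ge n/p$, is handled the same way with the $L_{\infty}$ triangle inequality, the stated bound remaining valid.

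The main obstacle is (i), for which I would invoke the coincidence $F_{p,2}^{0}=h_{p}$ with the local Hardy space for $0<p<\infty$ (one of the tools announced in the abstract). Since $\mathcal{F}^{-1}\Psi_{j}(x)=2^{jn}\psi(2^{j}x)$ with $\psi=\mathcal{F}^{-1}\Psi\in\mathcal{S}(\mathbb{R}^{n})$ and $\int_{\mathbb{R}^{n}}\psi\,dx=(2\pi)^{n/2}\Psi(0)\ne 0$, the family $\{Q_{j}f\}_{j\in\mathbb{N}}$ is exactly $\{\psi_{t}\ast f\}$ evaluated at the dyadic scales $t=2^{-j}\le\tfrac12$. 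Thus $\sup_{j\in\mathbb{N}}|Q_{j}f|$ is dominated by the local radial maximal function $\sup_{0<t\le 1}|\psi_{t}\ast f|$, and the maximal characterization of $h_{p}$ gives $\big\|\sup_{0<t\le 1}|\psi_{t}\ast f|\big\|_{p}\le c\,\|f\mid h_{p}\|=c\,\|f\mid F_{p,2}^{0}\|$. For $1<p<\infty$ one can bypass Hardy-space theory entirely, since then $F_{p,2}^{0}=L_{p}$ and the Schwartz decay of $\psi$ yields $\sup_{t}|\psi_{t}\ast f|\le c\,Mf$ with $M$ the Hardy--Littlewood maximal operator, whose $L_{p}$-boundedness finishes the case; the real difficulty is concentrated in $0<p\le 1$, where the maximal characterization of the local Hardy space is indispensable.
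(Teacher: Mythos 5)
Your proposal is correct and follows essentially the same route as the paper: part (i) via the identification $F_{p,2}^{0}=h_{p}$ and the maximal-function characterization of the local Hardy space (the paper simply cites this, you fill in the details), and parts (ii)--(iv) via the $\min(1,p)$- resp. $\min(1,t)$-quasi-triangle inequality, the single-block Besov bound, Lemma~\ref{Young'sing}, and summation of the resulting geometric series (the paper invokes Lemma~\ref{Hardying} for the case $s<0$, which is just a packaged form of the same geometric-series estimate you carry out directly). No gaps.
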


\begin{proof}
(i) follows from the equality between the local Hardy spaces $h_{p}$ and $%
F_{p,2}^{0}$, (cf. see \cite[ Section 2.2, p. 37, and Theorem 2.5.8/1]{T3}).
For (ii), it is sufficient to see that%
\begin{equation*}
\big\|Q_{j}f\big\|_{p}^{\tau }\leq \sum_{i=0}^{j}2^{-s\tau i}2^{s\tau i}%
\big\|\Delta _{i}f\big\|_{p}^{\tau }\leq c\text{ }\varepsilon _{j}^{\tau }%
\big\|f\mid B_{p,\infty }^{s}\big\|^{\tau },\quad j\in \mathbb{N}_{0},\tau
=\min (1,p)
\end{equation*}%
where if $s<0$ we have used Lemma \ref{Hardying}. Now Lemma \ref{Young'sing}
gives%
\begin{equation*}
\big\|\Delta _{j}f\big\|_{t}\leq c\text{ }2^{(\tfrac{n}{p}-\tfrac{n}{t})j}%
\big\|\Delta _{j}f\big\|_{p}\leq c\text{ }2^{(\tfrac{n}{p}-\tfrac{n}{t}-s)j}%
\big\|f\mid B_{p,\infty }^{s}\big\|
\end{equation*}%
and%
\begin{equation*}
\big\|Q_{j}f\big\|_{t}^{\tau }\leq c\sum_{i=0}^{j}2^{(\tfrac{n}{p}-\tfrac{n}{%
t}-s)\tau i}2^{s\tau i}\big\|\Delta _{i}f\big\|_{p}^{\tau }\leq c\text{ }%
\varepsilon _{j}^{\tau }\big\|f\mid B_{p,\infty }^{s}\big\|^{\tau },\quad
j\in \mathbb{N}_{0},\tau =\min (1,t).
\end{equation*}%
Thus we complete the proof of (iii) and (iv).
\end{proof}

\subsection{Decomposition of the product $\prod\limits_{i=1}^{m}f_{i}$}

For all $f_{i}\in \mathcal{S}^{\prime }(\mathbb{R}^{n})$, $i=1,2,...,m$ the
product $\prod_{i=1}^{m}f_{i}$ is defined by 
\begin{equation*}
\prod_{i=1}^{m}f_{i}=\lim_{j\rightarrow \infty }\prod_{i=1}^{m}Q_{j}f_{i},
\end{equation*}%
if the limit on the right-hand side exists in $\mathcal{S}^{\prime }(\mathbb{%
R}^{n})$. The following decomposition of this product is given in \cite[%
pages 77-78]{S}. We have the following formal decomposition:%
\begin{equation*}
\prod_{i=1}^{m}f_{i}=\sum_{k_{1},...,k_{m}=0}^{\infty
}\prod_{i=1}^{m}(\Delta _{k_{i}}f_{i}).
\end{equation*}%
The fundamental idea is to split $\prod_{i=1}^{m}f_{i}$ into two parts, both
of them being always defined. Let $N$ be a natural number greater than $%
1+\log _{2}3\left( m-1\right) $. Then we have the following decomposition:%
\begin{align*}
\prod_{i=1}^{m}f_{i}=& \sum_{j=0}^{\infty }\left[ Q_{j-N}f_{1}\cdot ...\cdot
Q_{j-N}f_{m-1}\cdot \Delta _{j}f_{m}+...\right. \\
& \left. +\left( \Pi _{l\neq k}Q_{j-N}f_{l}\right) \Delta
_{k}f_{j}+...+\Delta _{j}f_{1}\cdot Q_{j-N}f_{2}\cdot ....\cdot Q_{j-N}f_{m} 
\right] \\
& +\sum_{j=0}^{\infty }\sum^{j}\left( \Delta _{k_{1}}f_{1}\right) \cdot
....\cdot \left( \Delta _{k_{m}}f_{m}\right) ,
\end{align*}%
where the $\sum^{j}$ is taken over all $k\in \mathbb{Z}_{+}^{n}$ such that $%
\max_{\ell =1,...,m}k_{1}=k_{k_{m_{0}}}=j$\quad and\quad $\max_{\ell \neq
m_{0}}\left\vert \ell -k_{\ell }\right\vert <N$. Of course, if $k<0$ we put $%
\Delta _{k}f=0$. Probably $\sum^{j}$ becomes more transparent by restricting
to a typical part, which can be taken to be 
\begin{equation*}
\Big(\prod_{i\in I_{1}}\Delta _{j}f_{i}\Big)\prod_{i\in I_{2}}Q_{_{j}}f_{i},
\end{equation*}%
where 
\begin{equation*}
I_{1},I_{2}\subset \left\{ 1,...,m\right\} ,\quad I_{1}\cap I_{2}=\emptyset
,\quad I_{1}\cup I_{2}=\left\{ 1,...,m\right\} =I,\quad \left\vert
I_{1}\right\vert \geq 2.
\end{equation*}

We introduce the following notations%
\begin{equation*}
\Pi _{1,k}(f_{1},f_{2},...,f_{m})=\sum_{j=N}^{\infty }\Big(\prod_{i\neq
k}Q_{_{j-N}}f_{i}\Big)\Delta _{j}f_{k},\quad k\in I
\end{equation*}%
and%
\begin{equation*}
\Pi _{2}\left( f_{1},f_{2},...,f_{m}\right) =\sum_{j=0}^{\infty }\sum^{j}%
\Big(\prod_{i=1}^{m}\Delta _{k_{i}}f_{i}\Big).
\end{equation*}

The advantage of the above decomposition is based on%
\begin{equation*}
\text{supp }\mathcal{F}\Big(\Big(\prod_{i\neq k}Q_{_{j-N}}f_{i}\Big)\Delta
_{j}f_{k}\Big)\subset \big\{\xi \in \mathbb{R}^{n}:2^{j-1}\leq \left\vert
\xi \right\vert \leq 2^{j+1}\big\},\quad j\geq N
\end{equation*}%
and%
\begin{equation*}
\text{supp }\mathcal{F}\Big(\sum^{j}\Big(\prod_{i=1}^{m}\Delta _{k_{i}}f_{i}%
\Big)\Big)\subset \big\{\xi \in \mathbb{R}^{n}:\left\vert \xi \right\vert
\leq 2^{j+N-2}\big\},\quad j\in \mathbb{N}_{0}.
\end{equation*}%
Finally, if $j\in \mathbb{N}_{0}$, $J=J_{1}\cup J_{2}$ where $J_{1}\subseteq
I_{1}$ and $J_{2}\subseteq I_{2}$, we will use the following notation 
\begin{equation*}
\tilde{Q}_{j}f_{i}=\left\{ 
\begin{array}{ccc}
\Delta _{j}f_{i}, & \text{if} & i\in J_{1}, \\ 
Q_{j}f_{i}, & \text{if} & i\in J_{2}.%
\end{array}%
\right. 
\end{equation*}

\section{Results and their proofs}

We present our results in two different subsections.

\subsection{Products in spaces with positive smoothness}

In this subection we deal with the case 
\begin{equation}
0<s_{1}<s_{2}\leq s_{3}\leq ...\leq s_{m}.  \label{cond1}
\end{equation}

The main result of this subsection is the following theorem.

\begin{theorem}
\label{multi-positive-smothness}\textit{Let }$s_{i}\in \mathbb{R},1\leq
p_{1}<\infty ,0<p_{i}\leq \infty ,i=2,...,m\ $\textit{and }$0<q\leq \infty $.%
\textit{\ Assume }$\mathrm{\eqref{cond1}}$ and $s_{1}<\tfrac{n}{p_{1}}$.%
\textit{\ Suppose further}%
\begin{equation}
\tfrac{1}{p}=\tfrac{1}{p_{1}}+\sum_{i=2}^{m}\tfrac{1}{h_{i}}<1,
\label{cond2.1}
\end{equation}%
where%
\begin{equation}
\big(\tfrac{1}{p_{i}}-\tfrac{s_{i}}{n}\big)_{+}<\tfrac{1}{h_{i}}\leq \tfrac{1%
}{p_{i}},\quad i=2,...,m.  \label{cond2.2}
\end{equation}%
\textit{Then}%
\begin{equation*}
F_{p_{1},q}^{s_{1}}\cdot B_{p_{2},\infty }^{s_{2}}\cdot ...\cdot
B_{p_{m},\infty }^{s_{m}}\hookrightarrow F_{p,q}^{s_{1}},
\end{equation*}%
\textit{holds.}
\end{theorem}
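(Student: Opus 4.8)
The plan is to insert the paramultiplication decomposition $\prod_{i=1}^m f_i=\sum_{k\in I}\Pi_{1,k}(f_1,\dots,f_m)+\Pi_2(f_1,\dots,f_m)$ and to estimate each building block separately in $F_{p,q}^{s_1}$, exploiting that every summand has its Fourier support either in a dyadic annulus (the $\Pi_{1,k}$) or in a ball (the $\Pi_2$). For the annular pieces I would apply Lemma \ref{ortholemma1} with the weight $2^{js_1}$, and for $\Pi_2$ Lemma \ref{ortholemma2}, whose hypothesis $s_1>n(1/p-1)_+$ reduces to $s_1>0$ since \eqref{cond2.1} gives $1/p<1$. Two devices recur. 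First, a \emph{low}-frequency factor $Q_{j-N}f_i$ of a Besov function is dominated pointwise by $M_i:=\sup_k|Q_kf_i|\le\sum_l|\Delta_lf_i|$; combining Lemma \ref{estimates}(iii) with the $\tau$-triangle inequality gives $\|M_i\|_{h_i}\le c\,\|f_i\mid B_{p_i,\infty}^{s_i}\|$, and this converges precisely because \eqref{cond2.2} forces $\sigma_i:=s_i-n(\tfrac1{p_i}-\tfrac1{h_i})>0$. Second, the factor coming from $f_1$ is controlled by the maximal estimate $\|\sup_k|Q_kf_1|\|_r\le c\,\|f_1\mid F_{r,2}^0\|$ of Lemma \ref{estimates}(i), i.e. the Hardy-space/$F_{r,2}^0$ identity.

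The genuinely easy terms are $\Pi_{1,1}$, where $f_1$ sits at high frequency, and $\Pi_2$. For $\Pi_{1,1}=\sum_j\big(\prod_{i\ge2}Q_{j-N}f_i\big)\Delta_jf_1$ I would keep $2^{js_1}\Delta_jf_1$ inside the $\ell_q$-sum, dominate each $Q_{j-N}f_i$ by $M_i$, and apply H\"older with exponents $p_1,h_2,\dots,h_m$ (legitimate because \eqref{cond2.1} reads $1/p=1/p_1+\sum_{i\ge2}1/h_i$): the first factor becomes $\|f_1\mid F_{p_1,q}^{s_1}\|$ and the remaining ones $\|M_i\|_{h_i}\le c\,\|f_i\mid B_{p_i,\infty}^{s_i}\|$. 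The term $\Pi_2$ is handled analogously through Lemma \ref{ortholemma2}, the point being that at least two of its factors carry a $\Delta_j$, so the surplus smoothness makes the $j$-series converge, with Lemma \ref{Hardying} absorbing the geometric tails.

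The main obstacle is the family $\Pi_{1,k_0}$ with $k_0\neq1$, in which a single \emph{Besov} factor $\Delta_jf_{k_0}$ carries the high frequency while $f_1$ enters only through $Q_{j-N}f_1$. Here Lemma \ref{ortholemma1} forces $f_{k_0}$ into $F_{h,q}^{s_1}$, and the naive choice $h=h_{k_0}$ would require $\sigma_{k_0}>s_1$, which \eqref{cond2.2} does \emph{not} grant (it only gives $\sigma_{k_0}>0$). The resolution, and the place where the hypothesis $s_1<n/p_1$ is essential, is to trade the smoothness of $f_1$ for integrability: by Lemma \ref{embedding}(ii) one has $F_{p_1,q}^{s_1}\hookrightarrow F_{r,2}^0$ for every $r$ with $\tfrac1{p_1}-\tfrac{s_1}n\le\tfrac1r\le\tfrac1{p_1}$, hence $\|\sup_k|Q_kf_1|\|_r\le c\,\|f_1\mid F_{p_1,q}^{s_1}\|$. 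Introducing the ``demotion'' $\delta=\tfrac1{p_1}-\tfrac1r\in[0,\tfrac{s_1}n)$ and rebalancing H\"older with the new exponent $\tfrac1h=\tfrac1{h_{k_0}}+\delta$, the smoothness demanded of $f_{k_0}$ becomes $\sigma'=\sigma_{k_0}+n\delta$; since $s_{k_0}\ge s_2>s_1$ and $\sigma_{k_0}>0$, one can choose $\delta$ so that simultaneously $\sigma'>s_1$ and $h\ge p_{k_0}$.

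With such a choice, Lemma \ref{embedding} closes the estimate: the Sobolev-type embedding $B_{p_{k_0},\infty}^{s_{k_0}}\hookrightarrow B_{h,\infty}^{\sigma'}$ followed by the Franke--Jawerth/ordinary embedding $B_{h,\infty}^{\sigma'}\hookrightarrow F_{h,q}^{s_1}$ (valid because $\sigma'>s_1$) gives $\|f_{k_0}\mid F_{h,q}^{s_1}\|\le c\,\|f_{k_0}\mid B_{p_{k_0},\infty}^{s_{k_0}}\|$, and H\"older with exponents $r,\{h_i\}_{i\neq1,k_0},h$ (which sum to $1/p$) recombines the factors. Finally, Theorem \ref{interpolation} together with its interpolation property (Remark \ref{interpolation1}) can be used to interpolate the resulting bounds across the admissible range of $q$ and to pass to the full stated parameter family, thereby assembling $F_{p_1,q}^{s_1}\cdot B_{p_2,\infty}^{s_2}\cdots B_{p_m,\infty}^{s_m}\hookrightarrow F_{p,q}^{s_1}$. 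I expect the delicate point to be verifying, uniformly in $k_0$, that the intervals for $\delta$ coming from $\sigma'>s_1$ and from $h\ge p_{k_0}$ always overlap under \eqref{cond1}--\eqref{cond2.2}.
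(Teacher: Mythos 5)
Your treatment of $\Pi_{1,k}$ is essentially sound and close in spirit to the paper's: for $k=1$ the paper does exactly your H\"older-plus-maximal-function argument, and for $k\neq 1$ your device of demoting $f_1$ from $F^{s_1}_{p_1,q}$ to $F^0_{r,2}$ and re-balancing the exponents so that $f_{k_0}$ can be absorbed via $B^{s_{k_0}}_{p_{k_0},\infty}\hookrightarrow B^{\sigma'}_{h,\infty}\hookrightarrow F^{s_1}_{h,q}$ plays the same role as the paper's passage through $B^{s_1}_{p,\min(p,q)}$ and $B^{\sigma}_{v,p}$ with Franke--Jawerth; the condition $s_1<s_{k_0}$ and $\sigma_{k_0}>0$ make the two intervals for $\delta$ overlap, as you suspected.

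The genuine gap is $\Pi_2$, which you dispose of in one sentence and which is in fact the bulk of the paper's proof. Your claim that ``at least two factors carry a $\Delta_j$, so the surplus smoothness makes the $j$-series converge'' is false for the H\"older exponents you propose. A typical block is $2^{js_1}\big\|\prod_{i\in I_1}\Delta_jf_i\prod_{i\in I_2}Q_jf_i\big\|_p$; estimating each factor with the exponent $h_i$ yields the geometric factor $2^{j(s_1-\sum_{i\in I_1\setminus\{1\}}\sigma_i)}$ when $1\in I_2$, and each $\sigma_i=s_i-n(\tfrac1{p_i}-\tfrac1{h_i})$ is only known to be \emph{positive}, not larger than $s_1/|I_1\setminus\{1\}|$. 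Concretely, with $n=1$, $m=3$, $p_1=2$, $s_1=0.4$, $p_2=p_3=4$, $s_2=s_3=0.41$, $1/h_2=1/h_3=0.01$, all hypotheses \eqref{cond1}--\eqref{cond2.2} hold, yet $\sigma_2+\sigma_3=0.34<s_1$ and the series diverges; no single admissible redistribution of the H\"older exponents fixes this, because pushing $t_i$ toward $p_i$ to gain decay forces the remaining integrability onto factors that cannot absorb it. This is precisely why the paper splits $\Pi_2$ into the cases $s_i=n/p_i$, $s_i<n/p_i$, $s_i>n/p_i$, proves boundedness of $\Pi_2(\cdot,f_2,\dots,f_m):F^{s_1}_{p_1,q}\to F^{s_1}_{t,q}$ at two \emph{extreme} integrability indices (where either the decay $\sum s_i$ or the integrability is maximal), and then recovers the intermediate $p$ by the $\pm$-method of Gustaffson--Peetre (Theorem \ref{interpolation} and Remark \ref{interpolation1}) -- an interpolation in the integrability parameter, not in $q$ as you suggest at the end. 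You also ignore the logarithmic losses $(j+1)^{|K_2|}$ from Lemma \ref{estimates}(ii)/(iv) at the Sobolev endpoints $s_i=n/p_i$, which the paper must track ($u_j$, $u_j'$, $u_j''$) and which are only summable because a strictly negative geometric exponent survives. Without the case analysis and the interpolation step, the estimate of $\Pi_2$ does not close.
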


\begin{corollary}
\label{multi-positive-smothness1}Under the hypotheses of Theorem \ref%
{multi-positive-smothness}, then it holds%
\begin{equation*}
F_{p_{1},q}^{s_{1}}\cdot F_{p_{2},\infty }^{s_{2}}\cdot ...\cdot
F_{p_{m},\infty }^{s_{m}}\hookrightarrow F_{p,q}^{s_{1}}.
\end{equation*}
\end{corollary}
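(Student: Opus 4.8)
The plan is to deduce the corollary directly from Theorem \ref{multi-positive-smothness} by replacing each Triebel--Lizorkin factor $F_{p_i,\infty}^{s_i}$ ($i=2,\dots,m$) with the corresponding Besov factor $B_{p_i,\infty}^{s_i}$ via an elementary embedding, and then invoking the theorem verbatim. The one ingredient needed is the classical elementary embedding
\[
F_{p_i,\infty}^{s_i}\hookrightarrow B_{p_i,\infty}^{s_i},\qquad i=2,\dots,m.
\]
This is the special case $s_0=s=s_1$ (which forces $p_0=p=p_1=p_i$) of Lemma \ref{embedding}(iii): choosing there $q=\infty$ and $q_1=\infty\geq\max(p_i,\infty)$ yields exactly $F_{p_i,\infty}^{s_i}\hookrightarrow B_{p_i,\infty}^{s_i}$. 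Note that this forces $p_i<\infty$, which is in any case required for the spaces $F_{p_i,\infty}^{s_i}$ to be defined, so no hypothesis of the theorem is lost.

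With this in hand I would argue as follows. Let $f_1\in F_{p_1,q}^{s_1}$ and $f_i\in F_{p_i,\infty}^{s_i}$ for $i=2,\dots,m$. By the embedding above each $f_i$ belongs to $B_{p_i,\infty}^{s_i}$ with
\[
\big\|f_i\mid B_{p_i,\infty}^{s_i}\big\|\leq c\,\big\|f_i\mid F_{p_i,\infty}^{s_i}\big\|,\qquad i=2,\dots,m.
\]
In particular the product $\prod_{i=1}^m f_i$ is well defined in $\mathcal{S}^{\prime}(\mathbb{R}^n)$, since the limit defining it exists by Theorem \ref{multi-positive-smothness} applied to the tuple $(f_1,f_2,\dots,f_m)\in F_{p_1,q}^{s_1}\times B_{p_2,\infty}^{s_2}\times\cdots\times B_{p_m,\infty}^{s_m}$. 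The same theorem then gives
\[
\big\|f_1\cdots f_m\mid F_{p,q}^{s_1}\big\|\leq c\,\big\|f_1\mid F_{p_1,q}^{s_1}\big\|\prod_{i=2}^m\big\|f_i\mid B_{p_i,\infty}^{s_i}\big\|\leq c\,\big\|f_1\mid F_{p_1,q}^{s_1}\big\|\prod_{i=2}^m\big\|f_i\mid F_{p_i,\infty}^{s_i}\big\|,
\]
which is precisely the asserted multilinear boundedness
\[
F_{p_1,q}^{s_1}\cdot F_{p_2,\infty}^{s_2}\cdot\cdots\cdot F_{p_m,\infty}^{s_m}\hookrightarrow F_{p,q}^{s_1}.
\]

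There is essentially no hard step here: the entire analytic content is carried by the theorem, and the corollary is a one-line monotonicity statement for the multilinear product under embeddings of the individual factors. The only points demanding a little care are (a) confirming that the parameter constraints \eqref{cond1}--\eqref{cond2.2} are unaffected by passing from $B_{p_i,\infty}^{s_i}$ to $F_{p_i,\infty}^{s_i}$, which they are since they involve only $s_i,p_i$ and the auxiliary exponents $h_i$ and not the third (microscopic) index; and (b) observing that the product of the $F$-data coincides with the product of the same data regarded as $B$-data, which is immediate because the defining limit $\lim_{j\to\infty}\prod_i Q_jf_i$ does not depend on the ambient space in which the $f_i$ are viewed. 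Hence the corollary follows at once.
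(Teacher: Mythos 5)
Your proposal is correct and follows exactly the paper's own (one-line) argument: the corollary is deduced from Theorem \ref{multi-positive-smothness} via the elementary embedding $F_{p_i,\infty}^{s_i}\hookrightarrow B_{p_i,\infty}^{s_i}$ for $i=2,\dots,m$. The extra details you supply (deriving the embedding from Lemma \ref{embedding}(iii) and checking that the parameter conditions are untouched) are fine but not a different route.
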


The proof of Corollary \ref{multi-positive-smothness1} is immediate because $%
F_{p_{i},\infty }^{s_{i}}\hookrightarrow B_{p_{i},\infty }^{s_{i}}$, $%
i=2,3,...,m$.

\begin{remark}
Corollary \ref{multi-positive-smothness1} is given in \cite[Theorem \ 2.3.3]%
{S} and \cite[ Theorem \ 4.5.2/1]{RS}.
\end{remark}

\begin{remark}
We see that the condition $\sum_{i=1}^{m}s_{i}\geq \max \big(0,\sum_{i=1}^{m}%
\frac{n}{p_{i}}-n\big)$, is necessary for the pointwise multiplication, see 
\cite[ Theorem \ 4.3.1/2]{RS}, which is covered by \eqref{cond2.2}. For
multiplication of type $B_{p_{1},q}^{s_{1}}\cdot B_{p_{2},\infty
}^{s_{2}}\cdot ...\cdot B_{p_{m},\infty }^{s_{m}}\hookrightarrow
B_{p,q}^{s_{1}}$, once again, we refer the reader to the monograph of T.
Runst and W. Sickel \cite[Chapter 4]{RS}.
\end{remark}

\begin{proof}[\textbf{Proof of Theorem \protect\ref{multi-positive-smothness}%
}]
We begin by the estimation of $\Pi _{1,k}\left( f_{1},f_{2},...,f_{m}\right)
.$

\textbf{Estimation of}\textit{\ }$\Pi _{1,k}\left(
f_{1},f_{2},...,f_{m}\right) $. Lemma \ref{ortholemma1} gives%
\begin{equation}
\big\|\Pi _{1,k}(f_{1},f_{2},...,f_{m})\mid F_{p,q}^{s_{1}}\big\|\leq c\Big\|%
\Big\{2^{js_{1}}\Big(\prod_{i\neq k}Q_{_{j-N}}f_{i}\Big)\Delta _{j}f_{k}%
\Big\}_{j\geq N}\mid L_{p}(\ell _{q})\Big\|  \label{estimateQ1}
\end{equation}%
for any $k\in I$. First observe that%
\begin{equation}
\sup_{j\geq N}\big\|Q_{_{j-N}}f_{i}\big\|_{v_{i}}\leq c\big\|f_{i}\mid
F_{v_{i},2}^{0}\big\|,\quad 0<v_{i}<\infty ,i\in I.  \label{estimateQ}
\end{equation}%
Let $\tfrac{1}{p}=\tfrac{1}{p_{1}}+\sum_{i=2}^{m}\tfrac{1}{t_{i}}$ where 
\begin{equation*}
\big(\tfrac{1}{p_{i}}-\tfrac{s_{i}}{n}\big)_{+}<\tfrac{1}{t_{i}}\leq \tfrac{1%
}{p_{i}},\quad i\in I\backslash \{1\}.
\end{equation*}%
We split the estimation into two separate cases.

$\bullet $ \textbf{Case 1. }$k=1$\textbf{.} H\"{o}lder's inequality gives
that the right-hand side of $\mathrm{\eqref{estimateQ1}}$ is dominated by%
\begin{equation*}
c\big\|f\mid F_{p_{1},q}^{s_{1}}\big\|\prod_{i\in I\backslash \{1\}}\big\|%
f_{i}\mid F_{t_{i},2}^{0}\big\|.
\end{equation*}%
Since the $t_{i}$, $i\in I\backslash \{1\}$ may be chosen independent we can
sum up and leads to the restrictions%
\begin{equation*}
\tfrac{1}{p_{1}}+\sum_{i\in I\backslash \{1\}}\big(\tfrac{1}{p_{i}}-\tfrac{%
s_{i}}{n}\big)_{+}<\tfrac{1}{p}\leq \tfrac{1}{p_{1}}+\sum_{i\in I\backslash
\{1\}}\tfrac{1}{p_{i}}\text{.}
\end{equation*}%
Whereas is exactly $\mathrm{\eqref{cond2.1}}$, combined with $\mathrm{%
\eqref{cond2.2}}$. The estimate can be finished by taking into account 
\begin{equation*}
B_{p_{i},\infty }^{s_{i}}\hookrightarrow F_{t_{i},2}^{0},\quad i\in
I\backslash \{1\}.
\end{equation*}

$\bullet $ \textbf{Case 2. }$k\neq 1$\textbf{.} The situation is quite
different and more complicated. We set $\frac{1}{b}=\frac{1}{p_{k}}%
+\sum_{i\in I\backslash \{1,k\}}\tfrac{1}{t_{i}}+\frac{1}{p_{1}}-\frac{s_{1}%
}{n}$, where 
\begin{equation*}
\big(\tfrac{1}{p_{i}}-\tfrac{s_{i}}{n}\big)_{+}<\tfrac{1}{t_{i}}\leq \tfrac{1%
}{p_{i}},\quad i\in I\backslash \{1,k\}.
\end{equation*}%
To prove we additionally do it into the two Subcases 2.1, 2.2 and 2.3.%
\textit{\ }

$\bullet $ \textbf{Subcase 2.1. }$\frac{n}{p_{k}}\geq s_{k}$ or ($s_{1}\leq 
\frac{n}{p_{k}}<s_{k}$). First, assume that $p\leq b$. We put $\frac{1}{p}=%
\frac{1}{p_{k}}+\sum_{i\in I\backslash \{1,k\}}\tfrac{1}{t_{i}}+\frac{1}{%
\tau }$ with $\frac{1}{p_{1}}-\frac{s_{1}}{n}\leq \frac{1}{\tau }<\frac{1}{%
p_{1}}$. Thanks to Lemma \ref{estimates}/(iv) it is obvious that%
\begin{equation*}
\big\Vert Q_{j-N}f_{1}\big\Vert_{\tau }\lesssim \gamma _{k}\big\Vert %
f_{1}\mid F_{p_{1},\infty }^{s_{1}}\big\Vert,\quad j\geq N,
\end{equation*}%
since $\frac{n}{p_{1}}-s_{1}-\frac{n}{\tau }\leq 0$, where 
\begin{equation*}
\gamma _{k}=\left\{ 
\begin{array}{ccc}
j-N+1, & \text{if} & p=b, \\ 
1, & \text{if} & p<b.%
\end{array}%
\right.
\end{equation*}%
H\"{o}lder's inequality gives 
\begin{align*}
& \big\|\Pi _{1,k}(f_{1},f_{2},...,f_{m})\mid B_{p,\min (p,q)}^{s_{1}}\big\|
\\
\leq & c\Big\|\prod_{i\in I\backslash \{k\}}\sup_{j\geq N}\big|%
Q_{_{j-N}}f_{i}\big|\big\{2^{js_{1}}|\Delta _{j}f_{k}|\big\}_{j\geq N}\mid
\ell _{\min (p,q)}(L_{p})\Big\| \\
\leq & c\prod_{i\in I\backslash \{1,k\}}\big\|f_{i}\mid F_{t_{i},2}^{0}\big\|%
\big\|f_{k}\mid B_{p_{k},\infty }^{s_{k}}\big\|\big\Vert f_{1}\mid
F_{p_{1},\infty }^{s_{1}}\big\Vert,
\end{align*}%
because of $s_{1}<s_{k},k\neq 1$. The estimation can be finished by taking
into account 
\begin{equation}
B_{p,\min (p,q)}^{s_{1}}\hookrightarrow F_{p,q}^{s_{1}},\quad
B_{p_{i},\infty }^{s_{i}}\hookrightarrow F_{t_{i},2}^{0},\quad i\in
I\backslash \{1,k\}.  \label{second}
\end{equation}%
Now, assume that $p>b$. Let $u_{1}$ be a positive number satisfying 
\begin{equation*}
\max \Big(0,\tfrac{1}{p}-\tfrac{1}{p_{k}}-\sum_{i\in I\backslash \{1,k\}}%
\tfrac{1}{t_{i}}\Big)<\tfrac{1}{u_{1}}<\tfrac{1}{p_{1}}-\tfrac{s_{1}}{n}.
\end{equation*}%
We put%
\begin{equation*}
\tfrac{1}{v}=\tfrac{1}{p_{k}}+\sum_{i\in I\backslash \{1,k\}}\tfrac{1}{t_{i}}%
+\tfrac{1}{u_{1}},\quad \sigma =s_{1}-\tfrac{n}{p}+\tfrac{n}{v},\quad \beta
=s_{1}-\tfrac{n}{p_{1}}+\tfrac{n}{u_{1}}.
\end{equation*}%
These guarantee the embedding%
\begin{equation}
B_{v,p}^{\sigma }\hookrightarrow F_{p,q}^{s_{1}}.  \label{help3}
\end{equation}%
We need to estimate $\Pi _{1,k}(f_{1},f_{2},...,f_{m})$ in $B_{v,p}^{\sigma
} $\ spaces. H\"{o}lder's inequality yields that%
\begin{align}
& 2^{j\sigma }\big\Vert\prod_{i\in I\backslash \{k\}}Q_{_{j-N}}f_{i}\cdot
\Delta _{j}f_{k}\big\Vert_{v}  \label{new-case} \\
\leq & 2^{j\sigma }\prod_{i\in I\backslash \{1,k\}}\big\Vert Q_{_{j-N}}f_{i}%
\big\Vert_{t_{i}}\big\Vert Q_{_{j-N}}f_{1}\big\Vert_{u_{1}}\big\Vert\Delta
_{j}f_{k}\big\Vert_{p_{k}}.  \notag
\end{align}%
By Lemmas \ref{Young'sing} and \ref{Hardying} we obtain%
\begin{align*}
2^{k\beta }\,\sum_{l=0}^{j-N}\big\Vert\Delta _{l}f_{1}\big\Vert%
_{u_{1}}\lesssim & 2^{k\beta }\,\sum_{l=0}^{j-N}2^{-j\beta }2^{j\beta }%
\big\Vert\Delta _{l}f_{1}\big\Vert_{u_{1}} \\
\lesssim & \big\Vert f_{1}\mid F_{u_{1},\infty }^{\beta }\big\Vert,
\end{align*}%
since $\beta <0$, where the implicit constant is independent of $k$.
Consequently $\mathrm{\eqref{new-case}}$ is dominated by 
\begin{equation*}
c2^{j(\sigma -\beta -s_{k})}\prod_{i\in I\backslash \{1,k\}}\big\|f_{i}\mid
F_{t_{i},2}^{0}\big\|\big\|f_{k}\mid B_{p_{k},\infty }^{s_{k}}\big\|%
\big\Vert f\mid F_{u_{1},\infty }^{\beta }\big\Vert.
\end{equation*}%
Observe that $\sigma -\beta -s_{k}<0$, then the last expression in $\ell
^{p} $-quasi-norm is bounded by%
\begin{equation*}
c\prod_{i\in I\backslash \{1,k\}}\big\|f_{i}\mid B_{p_{i},\infty }^{s_{i}}%
\big\|\big\|f_{k}\mid B_{p_{k},\infty }^{s_{k}}\big\|\big\Vert f\mid
F_{u_{1},\infty }^{\beta }\big\Vert,
\end{equation*}%
where we have used the second embedding of $\mathrm{\eqref{second}}$\textrm{%
. }The desired estimate follows by the embeddings \eqref{help3} and $%
F_{p_{1},\infty }^{s_{1}}\hookrightarrow F_{u_{1},\infty }^{\beta }.$

$\bullet $\textbf{\ Subcase\ 2.2.}\textit{\ }$\frac{n}{p_{k}}<s_{1}<s_{k}$%
\textit{.} We have only the case $p<b$ needs to study. As in Subcase 2.1 we
obtain the desired estimate.

$\bullet $\textbf{\ Subcase\ 2.3. }We consider the case $\tfrac{1}{p}%
=\sum_{i=1}^{m}\tfrac{1}{p_{i}}$. We need to estimate $\Pi
_{1,k}(f_{1},f_{2},...,f_{m})$ in $B_{p,\min (p,q)}^{s_{1}}$\ spaces. First
observe that%
\begin{equation*}
\big\Vert Q_{j-N}f_{i}\big\Vert_{p_{i}}\lesssim \big\Vert f_{i}\mid
F_{p_{i},\infty }^{s_{i}}\big\Vert,\quad j\geq N,i\in I\backslash \{k\},
\end{equation*}%
since $s_{i}>0,i\in I\backslash \{k\}$, see Lemma \ref{estimates}. H\"{o}%
lder's inequality yields that%
\begin{align*}
& 2^{js}\big\Vert\prod_{i\in I\backslash \{k\}}Q_{_{j-N}}f_{i}\cdot \Delta
_{j}f_{k}\big\Vert_{p} \\
\leq & 2^{j(s-s_{k})}\prod_{i\in I\backslash \{k\}}\big\Vert f_{i}\mid
F_{p_{i},\infty }^{s_{i}}\big\Vert2^{js_{k}}\big\Vert\Delta _{j}f_{k}%
\big\Vert_{p_{k}}
\end{align*}%
for any $j\geq N$. Since $s_{1}<s_{k}$, Lemma \ref{ortholemma1} and the
first embeddings of \eqref{second} yield the desired estimate.

\textbf{Estimation of }$\Pi _{2}\left( f_{1},f_{2},...,f_{m}\right) $. The
situation is much more complicated. For simplicity we put 
\begin{equation*}
\tfrac{1}{t}=\sum_{i=1}^{m}\tfrac{1}{p_{i}}.
\end{equation*}%
From $\mathrm{\eqref{cond2.1}}$ and $\mathrm{\eqref{cond2.2}}$, we get 
\begin{equation*}
\tfrac{s_{1}}{n}+\sum_{i=1}^{m}\big(\tfrac{1}{p_{i}}-\tfrac{s_{i}}{n}\big)%
_{+}<\tfrac{1}{p}\leq \tfrac{1}{t}\leq 1.
\end{equation*}%
The whole estimate is divided to two steps.

$\bullet $ \textbf{Step 1. }We consider the case $s_{i}=\frac{n}{p_{i}},$%
\textbf{\ }$i\in K$\textbf{\ and\ }$s_{i}<\frac{n}{p_{i}},$ $i\in
I\backslash K,\mathbf{\ }K\subsetneq I$\textbf{.} We have%
\begin{equation}
\sum_{i\in I\backslash \left( K\cup \left\{ 1\right\} \right) }\big(\tfrac{1%
}{p_{i}}-\tfrac{s_{i}}{n}\big)+\tfrac{1}{p_{1}}<\tfrac{1}{p}\leq \tfrac{1}{t}%
.  \label{cond4}
\end{equation}%
We decompose $K\cup \left\{ 1\right\} $ into the disjoint union of $K_{1}$
and $K_{2}$, where%
\begin{equation}
K_{1}\subseteq I_{1}\text{ and }K_{2}\subseteq I_{2}  \label{cond5}
\end{equation}%
and $I_{1},I_{2}$ are defined in Section 3 (it seem obviously that $%
K_{1}\subsetneq I_{1}$ if $K_{2}=I_{2}$ and $K_{2}\subsetneq I_{2}$ if $%
K_{1}=I_{1}$. Also $1\notin K$ and if $1\in I_{1}$ then we have $1\in K_{1}$%
). Due to some technical reasons, we split this step into three separate
cases.

$\bullet $ \textbf{Case 1.} $I_{1}\backslash K_{1}\neq \emptyset $\textbf{\
and }$I_{2}\backslash K_{2}\neq \emptyset $\textbf{.} We continue with the
following subcases.

$\bullet $ \textbf{Subcase 1.1. } Assume that%
\begin{equation*}
\tfrac{1}{p_{1}}+\sum_{i\in I_{1}\backslash K_{1}}\big(\tfrac{1}{p_{i}}-%
\tfrac{s_{i}}{n}\big)<\tfrac{1}{p}-\sum_{i\in I_{2}\backslash K_{2}}\big(%
\tfrac{1}{p_{i}}-\tfrac{s_{i}}{n}\big)<\tfrac{1}{p_{1}}-\tfrac{s_{1}}{n}%
+\sum_{i\in I_{1}\backslash K_{1}}\tfrac{1}{p_{i}}.
\end{equation*}%
We put%
\begin{equation*}
\tfrac{1}{p}=\tfrac{1}{p_{1}}-\tfrac{s_{1}}{n}+\sum_{i\in I_{1}\backslash
K_{1}}\tfrac{1}{p_{i}}+\sum_{i\in I_{2}\backslash K_{2}}\tfrac{1}{t_{i}}%
,\quad \beta _{i}=s_{i}-\tfrac{n}{p_{i}}+\tfrac{n}{t_{i}},\quad i\in
I_{2}\backslash K_{2},
\end{equation*}%
where 
\begin{equation*}
\tfrac{s_{1}-\sum_{i\in I_{1}\backslash K_{1}}s_{i}}{n\left\vert
I_{2}\backslash K_{2}\right\vert }<\tfrac{1}{t_{i}}-\big(\tfrac{1}{p_{i}}-%
\tfrac{s_{i}}{n}\big)<0,\quad i\in I_{2}\backslash K_{2}.
\end{equation*}%
These are possible since both $I_{1}\backslash K_{1}$ and $I_{2}\backslash
K_{2}$ are not empty. Thanks to Lemma \ref{ortholemma2} it follows%
\begin{equation*}
\big\|\Pi _{2}(f_{1},f_{2},...,f_{m})\mid B_{p,\min \left( p,q\right)
}^{s_{1}}\big\|\leq c\Big\|\Big\{\sum^{j}\Big(\prod_{i=1}^{m}\Delta
_{k_{i}}f_{i}\Big)\Big\}\mid \ell _{\min \left( p,q\right) }^{s_{1}}\left(
L_{p}\right) \Big\|.
\end{equation*}%
First we see that, Lemma \ref{estimates}/(iii)-(iv) yields 
\begin{equation*}
\big\|\tilde{Q}_{j}f_{1}\big\|_{\tilde{p}_{1}}\leq c\text{ }u_{j}\big\|%
f_{1}\mid B_{p_{1},\infty }^{s_{1}}\big\|,
\end{equation*}%
with $\frac{1}{\tilde{p}_{1}}=\tfrac{1}{p_{1}}-\tfrac{s_{1}}{n}$ and%
\begin{equation}
u_{j}=\left\{ 
\begin{array}{ccc}
1, & \text{if} & 1\in I_{1}, \\ 
j+1, & \text{if} & 1\in I_{2}.%
\end{array}%
\right.  \label{cond6}
\end{equation}%
The H\"{o}lder inequality yields%
\begin{align}
& 2^{js_{1}}\Big\|\sum^{j}\Big(\prod_{i=1}^{m}\Delta _{k_{i}}f_{i}\Big)\Big\|%
_{p}  \label{cond7} \\
\leq & c\text{ }2^{js_{1}}u_{j}\big\|f_{1}\mid B_{p_{1},\infty }^{s_{1}}%
\big\|\prod_{i\in I_{1}\backslash K_{1}}\big\|\Delta _{j}f_{i}\big\|%
_{p_{i}}\prod_{i\in I_{2}\backslash K_{2}}\big\|Q_{j}f_{i}\big\|%
_{t_{i}}\prod_{i\in K}\big\|\tilde{Q}_{j}f_{i}\big\|_{\infty },  \notag
\end{align}%
for any $j\in \mathbb{N}_{0}$ with $c>0$ independent of\ $j$.\ Since $\beta
_{i}<0$ for any $i\in I_{2}\backslash K_{2}$, then Lemma \ref{estimates}%
/(ii)-(iii) gives%
\begin{equation*}
\big\|Q_{j-N}f_{i}\big\|_{t_{i}}\leq c\text{ }2^{-\beta _{i}j}\big\|%
f_{i}\mid B_{t_{i},\infty }^{\beta _{i}}\big\|,\quad i\in I_{2}\backslash
K_{2}
\end{equation*}%
and%
\begin{equation}
\big\|\tilde{Q}_{j}f_{i}\big\|_{\infty }\leq c\big\|f_{i}\mid
B_{p_{i},\infty }^{n/p_{i}}\big\|\times \left\{ 
\begin{array}{ccc}
1, & \text{if} & i\in K_{1}\backslash \left\{ 1\right\} , \\ 
j+1, & \text{if} & i\in K_{2}\backslash \left\{ 1\right\} .%
\end{array}%
\right.  \label{cond9}
\end{equation}%
Therefore the expression $\mathrm{\eqref{cond7}}$ is bounded by%
\begin{equation*}
c\text{ }2^{\gamma nj}u_{j}^{\prime }\big\|f_{1}\mid B_{p_{1},\infty
}^{s_{1}}\big\|\prod_{i\in I_{1}\backslash K_{1}}\big\|f_{i}\mid
B_{p_{i},\infty }^{s_{i}}\big\|\prod_{i\in I_{2}\backslash K_{2}}\big\|%
f_{i}\mid B_{t_{i},\infty }^{\beta _{i}}\big\|\prod_{i\in K}\big\|f_{i}\mid
B_{p_{i},\infty }^{n/p_{i}}\big\|,
\end{equation*}%
where for $j\in \mathbb{N}_{0}$,%
\begin{equation}
u_{j}^{\prime }=\left\{ 
\begin{array}{ccc}
\left( j+1\right) ^{\left\vert K_{2}\right\vert }, & \text{if} & 1\in I_{1}%
\text{ or }1\in K_{2}, \\ 
\left( j+1\right) ^{\left\vert K_{2}\right\vert +1}, & \text{if} & 1\notin
K_{2}\text{ and }1\in I_{2}%
\end{array}%
\right.  \label{cond10}
\end{equation}%
and 
\begin{equation*}
\gamma =\sum_{i\in I\backslash \left( K\cup \left\{ 1\right\} \right) }\big(%
\tfrac{1}{p_{i}}-\tfrac{s_{i}}{n}\big)+\tfrac{1}{p_{1}}-\tfrac{1}{p}.
\end{equation*}%
Using the embeddings%
\begin{equation}
F_{p,q}^{s_{1}}\hookrightarrow B_{p,\min \left( p,q\right) }^{s_{1}},\quad
F_{p_{1},q}^{s_{1}}\hookrightarrow B_{p_{1},\infty }^{s_{1}}  \label{cond11}
\end{equation}%
and 
\begin{equation*}
B_{p_{i},\infty }^{s_{i}}\hookrightarrow B_{t_{i},\infty }^{\beta
_{i}},\quad i\in I_{2}\backslash K_{2},
\end{equation*}%
we obtain that the $F_{p,q}^{s_{1}}$-norm of $\Pi _{2}\left(
f_{1},f_{2},...,f_{m}\right) $ can be estimated by%
\begin{equation*}
c\big\|f_{1}\mid F_{p_{1},q}^{s_{1}}\big\|\prod_{i\in I\backslash (K\cup
\{1\})}\big\|f_{i}\mid B_{p_{i},\infty }^{s_{i}}\big\|\prod_{i\in K}\big\|%
f_{i}\mid B_{p_{i},\infty }^{n/p_{i}}\big\|,
\end{equation*}%
in view of the fact that $\big\{2^{\gamma nj}u_{j}^{\prime }\big\}\in \ell
_{\min \left( p,q\right) }$.

$\bullet $ \textbf{Subcase 1.2.} We consider the case%
\begin{equation}
\tfrac{1}{p_{1}}-\tfrac{s_{1}}{n}+\sum_{i\in I\backslash \left( K\cup
\left\{ 1\right\} \right) }\tfrac{1}{p_{i}}-\sum_{i\in I_{2}\backslash K_{2}}%
\tfrac{s_{i}}{n}\leq \tfrac{1}{p}\leq \tfrac{1}{t}.  \label{cond12}
\end{equation}%
First, by H\"{o}lder's inequality and Lemma \ref{estimates}/(ii)-(iii), we
get%
\begin{align*}
& 2^{js_{1}}\Big\|\sum^{j}\Big(\prod_{i=1}^{m}\Delta _{k_{i}}f_{i}\Big)\Big\|%
_{t} \\
\leq & c\text{ }2^{js_{1}}u_{j}\big\|f_{1}\mid B_{p_{1},\infty }^{s_{1}}%
\big\|\prod_{i\in I_{1}\backslash K_{1}}\big\|\Delta _{j}f_{i}\big\|%
_{p_{i}}\prod_{i\in I_{2}\backslash K_{2}}\big\|Q_{j}f_{i}\big\|%
_{p_{i}}\prod_{i\in K}\big\|\tilde{Q}_{j}f_{i}\big\|_{p_{i}} \\
\leq & c\text{ }2^{j\varrho }u_{j}\prod_{i\in I\backslash K}\big\|f_{i}\mid
B_{p_{i},\infty }^{s_{i}}\big\|\prod_{i\in K}\big\|f_{i}\mid B_{p_{i},\infty
}^{n/p_{i}}\big\|,\quad j\in \mathbb{N}_{0},
\end{align*}%
where $c>0$ independent of\ $j$ and%
\begin{equation*}
\varrho =\left\{ 
\begin{array}{ccc}
-\sum\limits_{i\in I_{1}\backslash \left\{ 1\right\} }s_{i}, & \text{if} & 
1\in I_{1}, \\ 
s_{1}-\sum\limits_{i\in I_{1}\backslash \left\{ 1\right\} }s_{i}, & \text{if}
& 1\notin I_{1}.%
\end{array}%
\right.
\end{equation*}%
Since, in view of the fact that $\left\vert I_{1}\right\vert \geq 2$, 
\begin{equation*}
\big\{2^{j\varrho }u_{j}\big\}\in \ell _{\min \left( p,q\right) },
\end{equation*}%
and then by $\mathrm{\eqref{cond11}}$ we conclude the desired estimate. Now
let 
\begin{equation*}
\tfrac{1}{t_{1}}=\tfrac{1}{p_{1}}-\tfrac{s_{1}}{n}+\sum_{i\in
I_{1}\backslash K_{1}}\tfrac{1}{p_{i}}+\sum_{i\in I_{2}\backslash K_{2}}%
\tfrac{1}{t_{i}},
\end{equation*}%
where 
\begin{equation*}
\tfrac{1}{t_{i}}=\tfrac{1}{p_{i}}-\tfrac{s_{i}}{n},\quad i\in
I_{2}\backslash K_{2}.
\end{equation*}%
The H\"{o}lder inequality yields the estimate $\mathrm{\eqref{cond7}}$ (with 
$t_{1}$ in place of $p$). Lemma \ref{estimates}/(iii) gives 
\begin{equation}
\prod_{i\in I_{2}\backslash K_{2}}\big\|Q_{j}f_{i}\big\|_{t_{i}}\leq c\left(
j+1\right) ^{\left\vert I_{2}\backslash K_{2}\right\vert }\prod_{i\in
I_{2}\backslash K_{2}}\big\|f_{i}\mid B_{p_{i},\infty }^{s_{i}}\big\|.
\label{cond13}
\end{equation}%
The last estimate and $\mathrm{\eqref{cond9}}$ yield that 
\begin{equation*}
2^{js_{1}}\Big\|\sum^{j}\Big(\prod_{i=1}^{m}\Delta _{k_{i}}f_{i}\Big)\Big\|%
_{t_{1}}\leq c\text{ }2^{\gamma _{2}nj}\left( j+1\right) ^{\left\vert
I_{2}\backslash K_{2}\right\vert }u_{j}^{\prime }\prod_{i\in I\backslash K}%
\big\|f_{i}\mid B_{p_{i},\infty }^{s_{i}}\big\|\prod_{i\in K}\big\|f_{i}\mid
B_{p_{i},\infty }^{n/p_{i}}\big\|,
\end{equation*}%
where 
\begin{equation*}
\gamma _{2}=s_{1}-\sum_{i\in I_{1}\backslash K_{1}}s_{i}
\end{equation*}%
and $\big\{u_{j}^{\prime }\big\}$ is defined in $\mathrm{\eqref{cond10}}$.
We use the fact that 
\begin{equation*}
\big\{2^{\gamma _{2}nj}\left( j+1\right) ^{\left\vert I_{2}\backslash
K_{2}\right\vert }u_{j}^{\prime }\big\}\in \ell _{\min \left( p,q\right) },
\end{equation*}%
then the desired estimate can be obtained by the embeddings $\mathrm{%
\eqref{cond11}}$. Let $f_{i}\in B_{p_{i},\infty }^{s_{i}},i\in \left\{
2,...,m\right\} $. Hence 
\begin{equation*}
\Pi _{2}(\cdot ,f_{2},...,f_{m}):F_{p_{1},q}^{s_{1}}\longrightarrow
F_{t,q}^{s_{1}}\quad \text{and}\quad \Pi _{2}(\cdot
,f_{2},...,f_{m}):F_{p_{1},q}^{s_{1}}\longrightarrow F_{t_{1},q}^{s_{1}}.
\end{equation*}%
To cover all $p$ satisfying $\mathrm{\eqref{cond12}}$ we apply Theorem \ref%
{interpolation} and Remark \ref{interpolation1}. That completes the proof of
this subcase.

$\bullet $ \textbf{Case 2. }$I_{1}\backslash K_{1}\neq \emptyset $\textbf{\
\ and }$I_{2}\backslash K_{2}=\emptyset $\textbf{.} This case yields $K\cup
\left\{ 1\right\} =I_{2}\cup K_{1}$, where $K_{1}\subsetneq I_{1}$.
Furthermore in view of \eqref{cond4}, we have\textrm{\ } 
\begin{equation*}
\sum_{i\in I_{1}\backslash K_{1}}\big(\tfrac{1}{p_{i}}-\tfrac{s_{i}}{n}\big)+%
\tfrac{1}{p_{1}}<\tfrac{1}{p}\leq \tfrac{1}{t}.
\end{equation*}

$\bullet $ \textbf{Subcase 2.1. }Assume that 
\begin{equation*}
\sum_{i\in I_{1}\backslash K_{1}}\big(\tfrac{1}{p_{i}}-\tfrac{s_{i}}{n}\big)+%
\tfrac{1}{p_{1}}<\tfrac{1}{p}\leq \tfrac{1}{p_{1}}+\sum_{i\in
I_{1}\backslash K_{1}}\tfrac{1}{p_{i}}.
\end{equation*}%
We set 
\begin{equation*}
\tfrac{1}{p}=\tfrac{1}{p_{1}}+\sum_{i\in I_{1}\backslash K_{1}}\tfrac{1}{%
t_{i}},
\end{equation*}%
where 
\begin{equation*}
\tfrac{1}{p_{i}}-\tfrac{s_{i}}{n}<\tfrac{1}{t_{i}}\leq \tfrac{1}{p_{i}}%
,\quad i\in I_{1}\backslash K_{1}.
\end{equation*}%
The H\"{o}lder inequality, $\mathrm{\eqref{cond9}}$ and Lemma \ref{estimates}%
/(iii), yield%
\begin{align*}
& 2^{js_{1}}\Big\|\sum^{j}\Big(\prod_{i=1}^{m}\Delta _{k_{i}}f_{i}\Big)\Big\|%
_{p} \\
\leq & c\text{ }2^{js_{1}}u_{j}\big\|f_{1}\mid B_{p_{1},\infty }^{s_{1}}%
\big\|\prod_{i\in I_{1}\backslash K_{1}}\big\|\Delta _{j}f_{i}\big\|%
_{t_{i}}\prod_{i\in K}\big\|\tilde{Q}_{j}f_{i}\big\|_{\infty } \\
\leq & c\text{ }2^{js_{1}}u_{j}^{\prime \prime }\big\|f_{1}\mid
B_{p_{1},\infty }^{s_{1}}\big\|\prod_{i\in I_{1}\backslash K_{1}}\big\|%
\Delta _{j}f_{i}\big\|_{t_{i}}\prod_{i\in K}\big\|f_{i}\mid B_{p_{i},\infty
}^{n/p_{i}}\big\| \\
\leq & c\text{ }2^{\gamma _{3}nj}u_{j}^{\prime \prime }\prod_{i\in \left(
I_{1}\backslash K_{1}\right) \cup \left\{ 1\right\} }\big\|f_{i}\mid
B_{p_{i},\infty }^{s_{i}}\big\|\prod_{i\in K}\big\|f_{i}\mid B_{p_{i},\infty
}^{n/p_{i}}\big\|,
\end{align*}%
where $c>0$ independent of\ $j$, $\left\{ u_{j}\right\} $ is defined in $%
\mathrm{\eqref{cond6}}$ and for $j\in \mathbb{N}_{0}$ 
\begin{equation*}
u_{j}^{\prime \prime }=\left\{ 
\begin{array}{ccc}
\left( j+1\right) ^{\left\vert I_{2}\right\vert -1} & \text{if} & 1\in I_{2}
\\ 
2^{-s_{1}j}\left( j+1\right) ^{\left\vert I_{2}\right\vert } & \text{if} & 
1\in I_{1}%
\end{array}%
\right.
\end{equation*}%
and 
\begin{equation*}
\gamma _{3}=\tfrac{1}{p_{1}}-\tfrac{1}{p}+\sum_{i\in I_{1}\backslash K_{1}}%
\big(\tfrac{1}{p_{i}}-\tfrac{s_{i}}{n}\big).
\end{equation*}%
We take $\ell _{\min (p,q)}$-quasi-norm and we conclude the desired estimate
using $\mathrm{\eqref{cond11}}$\textrm{, }in view of the fact that $\gamma
_{3}<0$.

$\bullet $ \textbf{Subcase 2.2. }Assume that%
\begin{equation*}
\tfrac{1}{p_{1}}+\sum_{i\in I_{1}\backslash K_{1}}\tfrac{1}{p_{i}}<\tfrac{1}{%
p}\leq \tfrac{1}{t}.
\end{equation*}%
This case can be covered by the interpolation\ method given in Subcase\ 1.2.

$\bullet $ \textbf{Case 3.} $I_{1}\backslash K_{1}=\emptyset $\textbf{\ and }%
$I_{2}\backslash K_{2}\neq \emptyset $\textbf{.} This case yields $K\cup
\left\{ 1\right\} =I_{1}\cup K_{2}$, where $K_{2}\subsetneq I_{2}$. We set $%
I_{2}=K_{2}\cup I_{3}$. Therefore, by \eqref{cond4}, we have%
\begin{equation*}
\tfrac{1}{p_{1}}+\sum_{i\in I_{3}\backslash \left\{ 1\right\} }\big(\tfrac{1%
}{p_{i}}-\tfrac{s_{i}}{n}\big)<\tfrac{1}{p}\leq \tfrac{1}{t}.
\end{equation*}

$\bullet $ \textbf{Subcase\ 3.1. }We consider the case%
\begin{equation*}
\tfrac{1}{p_{1}}<\tfrac{1}{p}-\sum_{i\in I_{3}\backslash \left\{ 1\right\} }%
\big(\tfrac{1}{p_{i}}-\tfrac{s_{i}}{n}\big)<\tfrac{1}{p_{1}}-\tfrac{s_{1}}{n}%
+\sum_{i\in I_{1}\backslash \left\{ 1\right\} }\tfrac{1}{p_{i}}.
\end{equation*}%
Let $1<v<p$ be such that 
\begin{equation*}
\tfrac{1}{v}=\tfrac{1}{p_{1}}-\tfrac{s_{1}}{n}+\sum_{i\in I_{3}\backslash
\left\{ 1\right\} }\tfrac{1}{d_{i}}+\sum_{i\in I_{1}\backslash \left\{
1\right\} }\tfrac{1}{t_{i}},
\end{equation*}%
where $t_{i}>p_{i},i\in I_{1}\backslash \left\{ 1\right\} \ $and 
\begin{equation*}
\tfrac{1}{d_{i}}=\tfrac{1}{p_{i}}-\tfrac{s_{i}}{n},\quad i\in
I_{3}\backslash \left\{ 1\right\} .
\end{equation*}%
We set $\sigma =s_{1}-\tfrac{n}{p}+\tfrac{n}{v}$. The H\"{o}lder inequality
and Lemma \ref{estimates}/(iii), yield%
\begin{align*}
& 2^{j\sigma }\Big\|\sum^{j}\Big(\prod_{i=1}^{m}\Delta _{k_{i}}f_{i}\Big)%
\Big\|_{v} \\
\leq & c\text{ }u_{j}2^{j\sigma }\big\|f_{1}\mid B_{p_{1},\infty }^{s_{1}}%
\big\|\prod_{i\in I_{1}\backslash \left\{ 1\right\} }\big\|\Delta _{j}f_{i}%
\big\|_{t_{i}}\prod_{i\in K_{2}\backslash \left\{ 1\right\} }\big\|Q_{j}f_{i}%
\big\|_{\infty }\prod_{i\in I_{3}\backslash \left\{ 1\right\} }\big\|%
Q_{k}f_{i}\big\|_{d_{i}} \\
\leq & c\text{ }\eta _{j}\prod_{i=1}^{m}\big\|f_{i}\mid B_{p_{i},\infty
}^{s_{i}}\big\|,
\end{align*}%
where $c>0$ independent of\ $j$, $\left\{ u_{j}\right\} $ is defined in $%
\mathrm{\eqref{cond6}}$ and$\ $for $j\in \mathbb{N}_{0}$%
\begin{equation*}
\eta _{j}=2^{j(\tfrac{n}{p_{1}}+\sum_{i\in I_{3}\backslash \left\{ 1\right\}
}(\tfrac{n}{p_{i}}-s_{i})-\tfrac{n}{p})}\times \left\{ 
\begin{array}{ccc}
\left( j+1\right) ^{\left\vert I_{3}\right\vert }, & \text{if} & 1\in I_{1}%
\text{ or }1\in I_{3}, \\ 
\left( j+1\right) ^{\left\vert I_{3}\right\vert +1}, & \text{if} & 1\in K_{2}%
\end{array}%
\right. .
\end{equation*}%
Since $\left\{ \eta _{j}\right\} \in \ell _{p}$ and $\sigma >0$ we conclude
the desired estimate using $B_{v,p}^{\sigma }\hookrightarrow F_{p,q}^{s_{1}}$
and Lemma \ref{ortholemma2}.

$\bullet $ \textbf{Subcase 3.2. }We consider the case%
\begin{equation}
\sum_{i\in I_{1}\backslash \left\{ 1\right\} }\tfrac{1}{p_{i}}+\sum_{i\in
I_{3}\backslash \left\{ 1\right\} }\big(\tfrac{1}{p_{i}}-\tfrac{s_{i}}{n}%
\big)+\tfrac{1}{p_{1}}-\tfrac{s_{1}}{n}\leq \tfrac{1}{p}\leq \tfrac{1}{t}.
\label{new-cond}
\end{equation}%
Let $\tfrac{1}{t_{1}}$ be the real number given in the\ left-hand side of $%
\mathrm{\eqref{new-cond}}$. First Lemma \ref{estimates}/(iii), gives the
estimate $\mathrm{\eqref{cond13}}$ for $\infty $, $K_{2}\backslash \left\{
1\right\} $ and for $\big(\tfrac{1}{p_{i}}-\tfrac{s_{i}}{n}\big)^{-1}$, $%
I_{3}\backslash \left\{ 1\right\} $ in place of $t_{i}$, $I_{2}\backslash
K_{2}\ $respectively. The H\"{o}lder inequality, yields%
\begin{equation*}
2^{js_{1}}\Big\|\sum^{j}\Big(\prod_{i=1}^{m}\Delta _{k_{i}}f_{i}\Big)\Big\|%
_{t_{1}}\leq c\text{ }\omega _{j}\prod_{i=1}^{m}\big\|f_{i}\mid
B_{p_{i},\infty }^{s_{i}}\big\|,\quad j\in \mathbb{N}_{0},
\end{equation*}%
where $c>0$ independent of\ $j$, with 
\begin{equation*}
\omega _{j}=\left( j+1\right) ^{\left\vert I_{3}\backslash \left\{ 1\right\}
\right\vert +\left\vert K_{2}\backslash \left\{ 1\right\} \right\vert
}2^{j(s_{1}-\sum_{i\in I_{1}\backslash \left\{ 1\right\} }s_{i})}u_{j}
\end{equation*}%
for any $j\in \mathbb{N}_{0}$. Since $\left\{ \omega _{j}\right\} \in \ell
_{\min \left( p,q\right) }$, we conclude the desired estimate using $\mathrm{%
\eqref{cond11}}$. Let $f_{i}\in B_{p_{i},\infty }^{s_{i}},i\in \left\{
2,...,m\right\} $. By a simple modifications of arguments used in$\ $Subcase
1.2, we get 
\begin{equation*}
\Pi _{2}(\cdot ,f_{2},...,f_{m}):F_{p_{1},q}^{s_{1}}\longrightarrow
F_{t_{1},q}^{s_{1}}\quad \text{and}\quad \Pi _{2}(\cdot
,f_{2},...,f_{m}):F_{p_{1},q}^{s_{1}}\longrightarrow F_{t,q}^{s_{1}}.
\end{equation*}%
Hence by the same interpolation method given in Subcase 1.2, we get the
desired result.

$\bullet $ \textbf{Step 2. }We consider the case $s_{i}>\tfrac{n}{p_{i}}%
,i\in I_{4}\varsubsetneq I$\textbf{.} We have%
\begin{equation*}
I\backslash I_{4}=\{i:s_{i}\leq \tfrac{n}{p_{i}}\}=\tilde{I}.
\end{equation*}%
The desired estimation\ can be obtained by a simple modifications of
arguments used in Step 1, where we replace $I$ by $\tilde{I}$ and the fact
that%
\begin{equation*}
\big\|\tilde{Q}_{j}f_{i}\big\|_{\infty }\leq c\text{ }\big\|f_{i}\mid
B_{p_{i},\infty }^{s_{i}}\big\|
\end{equation*}%
for any $j\in \mathbb{N}_{0}$ and any $0<p_{i}\leq \infty $, $i\in I_{4}$,
where the positive constant $c$ is independent of $j$.

This finishes the proof of Theorem \ref{multi-positive-smothness}.
\end{proof}

\begin{remark}
The result of this subsection with $m=2$\ is given in \cite{DM2}.
\end{remark}

\subsection{Products in spaces with negative smoothness}

In this subsection we deal with the case 
\begin{equation}
s_{1}\leq 0<s_{2}\leq s_{3}\leq ...\leq s_{m}.  \label{Cond5.1}
\end{equation}%
The main result is the following statement.

\begin{theorem}
\label{multi-negative-smothness}\textit{Let }$1\leq p_{1}<\infty $, $%
0<p_{i}\leq \infty $ and $0<q\leq \infty ,i=2,...,m$\textit{.\ Suppose
further\ }$\mathrm{\eqref{cond2.1}}$, $\mathrm{\eqref{cond2.2}}$\textrm{, }$%
\mathrm{\eqref{Cond5.1}}$ \textit{and}%
\begin{equation*}
s_{1}+s_{2}>0
\end{equation*}%
\textit{Then}%
\begin{equation*}
F_{p_{1},q}^{s_{1}}\cdot B_{p_{2},\infty }^{s_{2}}\cdot ...\cdot
B_{p_{m},\infty }^{s_{m}}\hookrightarrow F_{p,q}^{s_{1}}
\end{equation*}%
\textit{holds.}
\end{theorem}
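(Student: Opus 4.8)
The plan is to reuse the paramultiplication splitting $\prod_{i=1}^{m}f_i=\sum_{k\in I}\Pi_{1,k}(f_1,\dots,f_m)+\Pi_2(f_1,\dots,f_m)$ and to rerun the proof of Theorem \ref{multi-positive-smothness}, tracking the two places where positivity of $s_1$ was previously exploited. Only $f_1$ carries non-positive smoothness; for $i\ge 2$ we still have $s_i>0$, so all the bounds on $\Delta_j f_i$ and $Q_{j-N}f_i$ coming from Lemma \ref{estimates} persist unchanged. The role of the new hypothesis $s_1+s_2>0$, which together with \eqref{Cond5.1} gives $s_1+s_k>0$ for every $k\ge 2$, is to keep the $j$-series summable once $Q_{j-N}f_1$ and $\Delta_j f_1$ are allowed to grow in $j$.

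For the off-diagonal blocks I would argue as before. The block $\Pi_{1,1}$, which contains $\Delta_j f_1$, is essentially unchanged: after Lemma \ref{ortholemma1} and Hölder's inequality the factor $f_1$ is felt only through $\|\{2^{js_1}\Delta_j f_1\}\mid L_{p_1}(\ell_q)\|=\|f_1\mid F_{p_1,q}^{s_1}\|$, which does not see the sign of $s_1$, while the remaining factors are absorbed via $B_{p_i,\infty}^{s_i}\hookrightarrow F_{t_i,2}^0$ exactly as in Case 1 of Theorem \ref{multi-positive-smothness}. For $k\ne 1$ the factor $f_1$ enters as $Q_{j-N}f_1$, and since $s_1\le 0$ Lemma \ref{estimates}/(ii)--(iv) yields only the growing bound $\|Q_{j-N}f_1\|_\tau\lesssim 2^{(\frac{n}{p_1}-\frac{n}{\tau}-s_1)j}\|f_1\mid B_{p_1,\infty}^{s_1}\|$. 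Against the decay $\|\Delta_j f_k\|_{p_k}\lesssim 2^{-js_k}\|f_k\mid B_{p_k,\infty}^{s_k}\|$ and the weight $2^{js_1}$, the net $j$-exponent is $\frac{n}{p_1}-\frac{n}{\tau}-s_k$; choosing $\tau\ge p_1$ (for instance $\tau=p_1$ when the $L_p$-budget permits) makes it strictly negative because $s_k>0$, and the subcase structure (the split $p\le b$ versus $p>b$, the Franke--Jawerth embeddings \eqref{second}, \eqref{help3}, \eqref{cond11}, and the auxiliary $F_{p_1,\infty}^{s_1}\hookrightarrow F_{u_1,\infty}^{\beta}$ with $\beta<0$) follows Subcases 2.1--2.3, the geometric sums being controlled by Lemma \ref{Hardying}. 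It is in the delicate $p>b$ situation, where $f_1$ is paired directly against $f_k$, that $s_1+s_k>0$ is genuinely invoked.

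The real obstacle is $\Pi_2$. In Theorem \ref{multi-positive-smothness} it was handled by first passing to $B_{p,\min(p,q)}^{s_1}$ through Lemma \ref{ortholemma2}; but that lemma requires smoothness $>n(\frac1p-1)_+=0$ (recall $\frac1p<1$), and here $s_1\le 0$, so it is simply unavailable at level $s_1$. My plan is instead to estimate $\Pi_2$ at a strictly positive level: fix $\sigma$ with $0<\sigma<s_1+s_2$, a nonempty range precisely because $s_1+s_2>0$, establish $\Pi_2\in F_{p,q}^{\sigma}$ by repeating the Step-1/Step-2 case analysis of Theorem \ref{multi-positive-smothness} with the target weight $2^{j\sigma}$ in place of $2^{js_1}$ (the smoothness of $f_1$ still entering through $\|f_1\mid B_{p_1,\infty}^{s_1}\|$), and then conclude with $F_{p,q}^{\sigma}\hookrightarrow F_{p,q}^{s_1}$ from Lemma \ref{embedding}/(ii). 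This works because in each typical term $(\prod_{i\in I_1}\Delta_j f_i)\prod_{i\in I_2}Q_j f_i$, with $|I_1|\ge 2$, the growth $2^{-js_1}$ of the $f_1$-factor is compensated either by the decay $2^{-js_{i_0}}$ of a second high-frequency factor $i_0\in I_1\setminus\{1\}$ when $1\in I_1$, giving net weight $\sim 2^{-j(s_1+s_{i_0})}$ with $s_1+s_{i_0}>0$, or by $2^{-j\sum_{i\in I_1}s_i}$ when $1\in I_2$; in both cases the surplus left over $s_1+s_2>0$ accommodates the extra weight $2^{j\sigma}$ together with the logarithmic factors $(j+1)^{\cdots}$ produced by Lemma \ref{estimates}, yielding a sequence in $\ell_{\min(p,q)}$. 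As in Subcases 1.2 and 3.2, Theorem \ref{interpolation} and Remark \ref{interpolation1} then interpolate between the two endpoint target exponents $t$ and $t_1$ to cover the whole range of $p$ allowed by \eqref{cond2.1}--\eqref{cond2.2}.

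I expect the sharpest bookkeeping to occur in the subcases with $1\in I_2$ of the $\Pi_2$ estimate, where $f_1$ sits in a low-frequency factor $Q_j f_1$ and Lemma \ref{estimates} costs an additional power of $(j+1)$. There one must verify that the margin furnished by the strict inequality $\sigma<s_1+s_2$ still dominates after this loss, which is exactly the quantitative content of the assumption $s_1+s_2>0$ and the point at which the negative-smoothness case departs essentially from the positive one.
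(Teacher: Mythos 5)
Your proposal is correct and follows essentially the same route as the paper: the key device for $\Pi_{2}$ --- estimating at a strictly positive auxiliary smoothness $\sigma\in(0,s_{1}+s_{2})$ because Lemma \ref{ortholemma2} is unavailable at level $s_{1}\leq 0$, and then embedding down into $F_{p,q}^{s_{1}}$ --- is exactly the paper's choice $\sigma=s_{1}+\theta$ with $-s_{1}<\theta<s_{2}$, and the interpolation via Theorem \ref{interpolation} to cover the full range of $p$ is likewise identical. The only divergence is cosmetic bookkeeping in $\Pi_{1,k}$ for $k\neq 1$, where the paper keeps the weight $2^{js_{1}}$ attached to $Q_{j-N}f_{1}$ in $L_{p}(\ell_{q})$ while you trade it for a growing sup-bound compensated by the decay of $\Delta_{j}f_{k}$; both work.
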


\begin{corollary}
\label{Cor1}\textit{Under the hypotheses of Theorem }\ref%
{multi-negative-smothness}\textit{, then it holds }%
\begin{equation*}
F_{p_{1},q}^{s_{1}}\cdot F_{p_{2},\infty }^{s_{2}}\cdot ...\cdot
F_{p_{m},\infty }^{s_{m}}\hookrightarrow F_{p,q}^{s_{1}}.
\end{equation*}
\end{corollary}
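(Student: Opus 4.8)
The plan is to obtain this corollary as an immediate consequence of Theorem \ref{multi-negative-smothness}, exactly as Corollary \ref{multi-positive-smothness1} was derived in the positive-smoothness case. The single ingredient I need is the elementary embedding
\begin{equation*}
F_{p_{i},\infty }^{s_{i}}\hookrightarrow B_{p_{i},\infty }^{s_{i}},\quad i=2,3,...,m,
\end{equation*}
which is available for every factor on the left-hand side, since the Triebel-Lizorkin spaces force $0<p_{i}<\infty$. This is the diagonal instance of Lemma \ref{embedding}/(iii): taking $p_{0}=p=p_{1}=p_{i}$, $s_{0}=s=s_{1}=s_{i}$ and second index $q=\infty$, the normalization $s_{0}-\tfrac{n}{p_{0}}=s-\tfrac{n}{p}=s_{1}-\tfrac{n}{p_{1}}$ holds trivially, and the constraint $q_{1}\geq \max (p,q)=\infty$ forces the target Besov index to be $\infty$, giving $F_{p_{i},\infty }^{s_{i}}\hookrightarrow B_{p_{i},\infty }^{s_{i}}$ (equivalently, this is the standard sharp embedding $F_{p,q}^{s}\hookrightarrow B_{p,\max (p,q)}^{s}$ specialized to $q=\infty$).

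First I would note that the corollary inherits verbatim the hypotheses of Theorem \ref{multi-negative-smothness}, so no additional conditions on the $s_{i}$, $p_{i}$, $q$ need to be verified. Then, since Theorem \ref{multi-negative-smothness} furnishes the continuous multiplication
\begin{equation*}
F_{p_{1},q}^{s_{1}}\cdot B_{p_{2},\infty }^{s_{2}}\cdot ...\cdot B_{p_{m},\infty }^{s_{m}}\hookrightarrow F_{p,q}^{s_{1}},
\end{equation*}
composing this with the $m-1$ inclusions displayed above (applied to the factors $f_{2},...,f_{m}$) immediately yields the continuity of the map $(f_{1},...,f_{m})\mapsto f_{1}\cdot ...\cdot f_{m}$ from $F_{p_{1},q}^{s_{1}}\cdot F_{p_{2},\infty }^{s_{2}}\cdot ...\cdot F_{p_{m},\infty }^{s_{m}}$ into $F_{p,q}^{s_{1}}$, which is precisely the claimed embedding.

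There is essentially no genuine obstacle here; the only point to get right is the \emph{direction} of the Besov--Triebel-Lizorkin embedding, namely that a Triebel-Lizorkin space with microscopic index $\infty$ sits \emph{inside} the corresponding Besov space with the same second index. Because replacing each $F_{p_{i},\infty }^{s_{i}}$ by the larger space $B_{p_{i},\infty }^{s_{i}}$ only enlarges the domain factors, it cannot destroy the multiplication estimate already established in Theorem \ref{multi-negative-smothness}. This mirrors exactly the one-line argument used for Corollary \ref{multi-positive-smothness1}.
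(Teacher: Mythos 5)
Your proposal is correct and coincides with the paper's own argument: the corollary is deduced from Theorem \ref{multi-negative-smothness} solely via the elementary embedding $F_{p_{i},\infty }^{s_{i}}\hookrightarrow B_{p_{i},\infty }^{s_{i}}$, $i=2,\dots ,m$, exactly as for Corollary \ref{multi-positive-smothness1}. The extra justification you give through Lemma \ref{embedding}/(iii) is sound but more than the paper bothers to record.
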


The proof of Corollary $\mathrm{\ref{Cor1}}$ is immediate because $%
F_{p_{i},\infty }^{s_{i}}\hookrightarrow B_{p_{i},\infty }^{s_{i}}$, $%
i=2,...,m$.

\begin{remark}
We note that Corollary \ref{Cor1} is given in \cite[Theorem \ 2.4.5]{S} and 
\cite[ Theorem \ 4.5.2/1]{RS}.
\end{remark}

\begin{remark}
As in the preceding subsection, the case $m=2$ (with $0<p,p_{1}<\infty $) in
this section was treated in\cite{DM2}.
\end{remark}

\begin{proof}[\textbf{Proof of Theorem \protect\ref{multi-negative-smothness}%
.}]
In view of the estimation of $\Pi _{1,k}(f_{1},f_{2},...,f_{m})$ in the
proof of Theorem \ref{multi-positive-smothness}, it remains to consider $\Pi
_{2}\left( f_{1},f_{2},...,f_{m}\right) $. Indeed, we see that we need only
to study the case $k\neq 1$. Let $\tfrac{1}{p}=\tfrac{1}{p_{1}}%
+\sum_{i=2}^{m}\tfrac{1}{t_{i}}$ where 
\begin{equation*}
\big(\tfrac{1}{p_{i}}-\tfrac{s_{i}}{n}\big)_{+}<\tfrac{1}{t_{i}}\leq \tfrac{1%
}{p_{i}},\quad i\in I\backslash \{1\}.
\end{equation*}%
Our estimate follows by using the estimate $\mathrm{\eqref{estimateQ}}$, 
\begin{equation*}
\sup_{j\geq N}\big\|\Delta _{j}f_{k}\big\|_{t_{k}}\leq c\big\|f_{k}\mid
B_{p_{k},\infty }^{s_{k}}\big\|.
\end{equation*}%
and the fact that 
\begin{equation*}
\Big\|\Big\{2^{js_{1}}Q_{_{j-N}}f_{1}\Big\}_{j\geq N}\mid L_{p}(\ell _{q})%
\Big\|\lesssim \big\|f_{1}\mid F_{p_{1},q}^{s_{1}}\big\|.
\end{equation*}

\textbf{Estimation\ of }$\Pi _{2}(f_{1},f_{2},...,f_{m})$. Let $K$ be us in $%
\mathrm{\eqref{cond5}}$. For simplicity we put $\sum_{i\in K}\tfrac{1}{r}=0\ 
$if$\ K=\emptyset $. From $\mathrm{\eqref{cond2.1}}$ and $\mathrm{%
\eqref{cond2.2}}$, we get 
\begin{equation}
\tfrac{s_{1}}{n}+\sum_{i=1}^{m}\big(\tfrac{1}{p_{i}}-\tfrac{s_{i}}{n}\big)%
_{+}<\tfrac{1}{p}\leq \sum_{i=1}^{m}\tfrac{1}{p_{i}}\leq 1.
\label{new-cond1}
\end{equation}%
We split estimation into several distinct parts;

$\bullet $\textbf{\ Step 1. }We consider the case $s_{i}=\tfrac{n}{p_{i}}%
,i\in K$\ and\ $s_{i}<\tfrac{n}{p_{i}},i\in I\backslash K,K\subsetneq I$%
\textbf{. }We have 
\begin{equation*}
\tfrac{s_{1}}{n}+\big(\tfrac{1}{p_{1}}-\tfrac{s_{1}}{n}\big)_{+}=\tfrac{1}{%
p_{1}}.
\end{equation*}%
Again, we decompose $K\cup \left\{ 1\right\} $ into the disjoint union of $%
K_{1}$ and $K_{2}$, where $K_{1}\subseteq I_{1}$ and $K_{2}\subseteq I_{2}$,
see $\mathrm{\eqref{cond5}}$. We are forced to consider the following cases
separately:

$\bullet $ \textbf{Case 1.\ }$I_{1}\backslash K_{1}\neq \emptyset $\textbf{\
and }$I_{2}\backslash K_{2}\neq \emptyset $\textbf{. }We will divide this
case into the following two subcases.

$\bullet $ \textbf{Subcase 1.1. }Here we deal with%
\begin{equation}
\tfrac{1}{p_{1}}+\sum_{i\in I_{1}\backslash K_{1}}\tfrac{1}{p_{i}}%
+\sum_{i\in I_{2}\backslash K_{2}}\big(\tfrac{1}{p_{i}}-\tfrac{s_{i}}{n}\big)%
\leq \tfrac{1}{p}\leq \tfrac{1}{t},  \label{Cond5.7.1}
\end{equation}%
where $t=\big(\sum_{i=1}^{m}\tfrac{1}{p_{i}}\big)^{-1}$. We choose $\theta $
such that $-s_{1}<\theta <s_{2}$. This guarantees%
\begin{equation}
B_{p,\infty }^{s_{1}+\theta }\hookrightarrow F_{p,q}^{s_{1}}.
\label{Cond5.7}
\end{equation}%
Thanks to Lemma \ref{ortholemma2} it follows%
\begin{equation*}
\big\|\Pi _{2}(f_{1},f_{2},...,f_{m})\mid B_{p,\infty }^{s_{1}+\theta }\big\|%
\leq c\Big\|\Big\{\sum^{j}\Big(\prod_{i=1}^{m}\Delta _{k_{i}}f_{i}\Big)\Big\}%
\mid \ell _{\infty }^{s_{1}+\theta }\left( L_{p}\right) \Big\|.
\end{equation*}%
First let%
\begin{equation}
\tfrac{1}{p_{0}}=\tfrac{1}{p_{1}}+\sum_{i\in I_{1}\backslash K_{1}}\tfrac{1}{%
p_{i}}+\sum_{i\in I_{2}\backslash K_{2}}\tfrac{1}{t_{i}},  \label{Cond5.8}
\end{equation}%
where 
\begin{equation*}
t_{i}=\big(\tfrac{1}{p_{i}}-\tfrac{s_{i}}{n}\big)^{-1},\quad i\in
I_{2}\backslash K_{2}.
\end{equation*}%
The same arguments used in Subcase 1.2 in the proof of Theorem \ref%
{multi-positive-smothness}, yield%
\begin{equation*}
2^{j\left( s_{1}+\theta \right) }\Big\|\sum^{j}\Big(\prod_{i=1}^{m}\Delta
_{k_{i}}f_{i}\Big)\Big\|_{p_{0}}\leq c\text{ }\varepsilon _{j}\prod_{i\in
I\backslash K}\big\|f_{i}\mid B_{p_{i},\infty }^{s_{i}}\big\|\prod_{i\in K}%
\big\|f_{i}\mid B_{p_{i},\infty }^{n/p_{i}}\big\|,\quad j\in \mathbb{N}_{0},
\end{equation*}%
where the positive constant $c$ is independent of $j$ and 
\begin{equation*}
\varepsilon _{j}=2^{j(\theta -\sum_{i\in I_{1}\backslash K_{1}}s_{i})}\left(
j+1\right) ^{\left\vert I_{2}\backslash \left\{ 1\right\} \right\vert
},\quad j\in \mathbb{N}_{0}.
\end{equation*}%
The choice of $\theta $ ensure that $\sum_{i\in I_{1}\backslash
K_{1}}s_{i}-\theta >0$, this yields $\left\{ \varepsilon _{j}\right\} \in
\ell _{\infty }$. The above estimate, combined with the embedding%
\begin{equation}
F_{p_{1},q}^{s_{1}}\hookrightarrow B_{p_{1},\infty }^{s_{1}},
\label{Cond5.9}
\end{equation}%
give 
\begin{equation*}
\Pi _{2}(\cdot ,f_{2},...,f_{m}):F_{p_{1},q}^{s_{1}}\longrightarrow
F_{p_{0},q}^{s_{1}}.
\end{equation*}%
By the same technical above we can prove that 
\begin{equation*}
\Pi _{2}(\cdot ,f_{2},...,f_{m}):F_{p_{1},q}^{s_{1}}\longrightarrow
F_{t,q}^{s_{1}},
\end{equation*}%
where $f_{i}\in B_{p_{i},\infty }^{s_{i}},i\in \left\{ 2,...,m\right\} $. To
cover all $p$ satisfying $\mathrm{\eqref{Cond5.7.1}}$ we apply Theorem \ref%
{interpolation} and Remark \ref{interpolation1}.

$\bullet $ \textbf{Subcase 1.2. }We consider the case when $p$ in the
following range:\textbf{\ }%
\begin{equation*}
\sum_{i\in I_{2}\backslash K_{2}}\tfrac{s_{i}}{n}<\tfrac{1}{p_{1}}%
+\sum_{i\in I\backslash \left( K\cup \left\{ 1\right\} \right) }\tfrac{1}{%
p_{i}}-\tfrac{1}{p}<\sum_{i\in I\backslash \left( K\cup \left\{ 1\right\}
\right) }\tfrac{s_{i}}{n}.
\end{equation*}%
We recall that $p_{0}$ is given in $\mathrm{\eqref{Cond5.8}}$. Let $\alpha
>0 $ be such that 
\begin{equation*}
\tfrac{n}{p_{0}}-\tfrac{n}{p}<\sum_{i\in I_{1}\backslash K_{1}}s_{i}-\alpha
\quad \text{and}\quad \alpha <\sum_{i\in I_{1}\backslash K_{1}}s_{i}+s_{1}.
\end{equation*}%
We see that 
\begin{equation*}
B_{p_{0},\infty }^{\sum_{i\in I_{1}\backslash K_{1}}s_{i}+s_{1}-\alpha
}\hookrightarrow B_{p_{0},\min \left( p,q\right) }^{s_{1}+\tfrac{n}{p_{0}}-%
\tfrac{n}{p}}\hookrightarrow B_{p,\min \left( p,q\right)
}^{s_{1}}\hookrightarrow F_{p,q}^{s_{1}}.
\end{equation*}%
Using the same technical used above, it is not hard to obtain the desired
estimate.

$\bullet $ \textbf{Case 2. }$I_{1}\backslash K_{1}\neq \emptyset $\ and $%
I_{2}\backslash K_{2}=\emptyset $. Then $K\cup \left\{ 1\right\} =I_{2}\cup
\left\{ 1\right\} \cup I_{3}$ where $I_{3}\subsetneq I_{1}$. This case can
be covered by the same arguments as in Case 1. So we omit the details.

$\bullet $ \textbf{Case 3. }$I_{1}\backslash K_{1}=\emptyset $\ and $%
I_{2}\backslash K_{2}\neq \emptyset $. To prove we additionally do it into
the two Substeps 3.1 and 3.2.

$\bullet $ \textbf{Subcase 3.1. }We begin by the case when $\ p$ in the
following range:\textbf{\ }%
\begin{equation*}
\tfrac{1}{p_{1}}<\tfrac{1}{p}-\sum_{i\in I_{2}\backslash K_{2}}\big(\tfrac{1%
}{p_{i}}-\tfrac{s_{i}}{n}\big)<\tfrac{1}{p_{1}}+\tfrac{s_{1}}{n}+\sum_{i\in
I_{1}\backslash \left\{ 1\right\} }\tfrac{1}{p_{i}}.
\end{equation*}

$\bullet $ $s_{1}>\tfrac{n}{p}-n$. Let $v>0$ be such that%
\begin{equation*}
\tfrac{1}{p}-\tfrac{s_{1}}{n}<\tfrac{1}{v}<\tfrac{1}{p_{1}}+\sum_{i\in
I_{1}\backslash \left\{ 1\right\} }\tfrac{1}{p_{i}}+\sum_{i\in
I_{2}\backslash K_{2}}\big(\tfrac{1}{p_{i}}-\tfrac{s_{i}}{n}\big).
\end{equation*}%
We set 
\begin{equation*}
\tfrac{1}{v}=\tfrac{1}{p_{1}}+\sum_{i\in I_{2}\backslash K_{2}}\tfrac{1}{%
d_{i}}+\sum_{i\in I_{1}\backslash \left\{ 1\right\} }\tfrac{1}{t_{i}},
\end{equation*}%
where $t_{i}>p_{i}$,\ $i\in I_{1}\backslash \left\{ 1\right\} $ and $\tfrac{1%
}{d_{i}}=\tfrac{1}{p_{i}}-\tfrac{s_{i}}{n}$, $i\in I_{2}\backslash K_{2}$.
With minor technical changes given in Subcase 3.1 in the proof of Theorem %
\ref{multi-positive-smothness}, we can get the desired result.

$\bullet $ $s_{1}\leq \tfrac{n}{p}-n$. Let $v>0$ be such that 
\begin{equation}
\tfrac{1}{v}=\tfrac{1}{p_{1}}+\sum_{i\in I_{1}\backslash \left\{ 1\right\} }%
\tfrac{1}{p_{i}}+\sum_{i\in I_{2}\backslash K_{2}}\tfrac{1}{t_{i}},\quad
\sigma =s_{1}-\tfrac{n}{p}+\tfrac{n}{v},  \label{sigma}
\end{equation}%
where 
\begin{equation*}
\big(\tfrac{1}{p_{i}}-\tfrac{s_{i}}{n}\big)^{-1}>t_{i}>p_{i},\quad i\in
I_{2}\backslash K_{2}.
\end{equation*}%
We have%
\begin{equation}
B_{v,p}^{\tfrac{n}{v}-n+\theta }\hookrightarrow B_{v,p}^{\tfrac{n}{v}%
-n}\hookrightarrow B_{v,p}^{\sigma }\hookrightarrow F_{p,q}^{s_{1}},
\label{Cond5.20}
\end{equation}%
where $\theta >n-\tfrac{n}{v}$. Therefore\ for any $j\in \mathbb{N}_{0}$,%
\begin{align*}
& 2^{j(\tfrac{n}{v}-n+\theta )}\Big\|\sum^{j}\Big(\prod_{i=1}^{m}\Delta
_{k_{i}}f_{i}\Big)\Big\|_{v} \\
\leq & c\text{ }2^{j\nu }u_{j}^{\prime }\big\|f_{1}\mid B_{p_{1},\infty
}^{s_{1}}\big\|\prod_{i\in I_{2}\backslash K_{2}}\big\|f_{i}\mid
B_{p_{i},\infty }^{s_{i}}\big\|\prod_{i\in I_{1}\cup K_{2}}\big\|f_{i}\mid
B_{p_{i},\infty }^{n/p_{i}}\big\|,
\end{align*}%
where the positive constant $c$ is independent of $j$, $\big\{u_{j}^{\prime }%
\big\}$ is defined by $\mathrm{\eqref{cond10}}$ and%
\begin{equation*}
\nu =\tfrac{n}{v}-n-s_{1}-\sum_{i\in I_{1}\backslash \left\{ 1\right\} }%
\tfrac{n}{p_{i}}+\theta .
\end{equation*}%
The choice of $\theta $ such that $\nu <0$, yields $\big\{2^{j\nu }\cdot
u_{j}^{\prime }\big\}\in \ell _{p}$. We conclude the desired estimate using $%
\mathrm{\eqref{Cond5.9}}$ and $\mathrm{\eqref{Cond5.20}}$.

$\bullet $ \textbf{Subcase 3.2. }We consider the case%
\begin{equation*}
\sum_{i\in I_{1}\backslash \left\{ 1\right\} }\tfrac{1}{p_{i}}+\sum_{i\in
I_{2}\backslash K_{2}}\big(\tfrac{1}{p_{i}}-\tfrac{s_{i}}{n}\big)+\tfrac{1}{%
p_{1}}+\tfrac{s_{1}}{n}\leq \tfrac{1}{p}\leq \tfrac{1}{t}.
\end{equation*}%
First let%
\begin{equation*}
\tfrac{1}{v}=\tfrac{1}{p_{1}}+\sum_{i\in I_{1}\backslash \left\{ 1\right\} }%
\big(\tfrac{1}{p_{i}}+\tfrac{s_{1}}{\left\vert I_{1}\backslash \left\{
1\right\} \right\vert n}\big)+\sum_{i\in I_{2}\backslash K_{2}}\big(\tfrac{1%
}{p_{i}}-\tfrac{s_{i}}{n}\big).
\end{equation*}%
Here we observe that $\tfrac{1}{p_{i}}+\tfrac{s_{1}}{\left\vert
I_{1}\backslash \left\{ 1\right\} \right\vert n}>0$ for any $i\in
I_{1}\backslash \left\{ 1\right\} $, since $s_{1}+s_{i}>0$ for any $i\in
I\backslash \left\{ 1\right\} $. First Lemma \ref{estimates}/(iii), gives 
\begin{equation*}
\prod_{i\in J_{2}}\big\|Q_{j}f_{i}\big\|_{\infty }\leq c\text{ }%
u_{j}^{\prime }\prod_{i\in K_{2}}\big\|f_{i}\mid B_{p_{i},\infty }^{s_{i}}%
\big\|,\quad j\in \mathbb{N}_{0}.
\end{equation*}%
The H\"{o}lder inequality and Lemma \ref{estimates}/(iii), yield%
\begin{equation*}
2^{j\left( s_{1}+\theta \right) }\Big\|\sum^{j}\Big(\prod_{i=1}^{m}\Delta
_{k_{i}}f_{i}\Big)\Big\|_{v}\leq c\text{ }u_{j}^{\prime }2^{j(\theta
-\sum_{i\in I_{1}\backslash \left\{ 1\right\} }n/p_{i})}\prod_{i=1}^{m}\big\|%
f_{i}\mid B_{p_{i},\infty }^{s_{i}}\big\|,\quad j\in \mathbb{N}_{0},
\end{equation*}%
where the positive constant $c$ is independent of $j\in \mathbb{N}_{0}$. By
taking 
\begin{equation}
-s_{1}<\theta <\sum_{i\in I_{1}\backslash \left\{ 1\right\} }\tfrac{n}{p_{i}}%
,  \label{theta}
\end{equation}%
which is possible because of $\left\vert I_{1}\backslash \left\{ 1\right\}
\right\vert \geq 1$ and we get $\big\{2^{j(\theta -\sum_{i\in
I_{1}\backslash \left\{ 1\right\} }n/p_{i})}\big\}\in \ell _{\min \left(
p,q\right) }$. Let $f_{i}\in B_{p_{i},\infty }^{s_{i}},i\in \left\{
2,...,m\right\} $. We conclude the desired estimate using $\mathrm{%
\eqref{Cond5.7}}$ and $\mathrm{\eqref{Cond5.9}}$. Therefore,%
\begin{equation*}
\Pi _{2}(\cdot ,f_{2},...,f_{m}):F_{p_{1},q}^{s_{1}}\longrightarrow
F_{v,q}^{s_{1}}.
\end{equation*}%
Now if $p=t$, by a simple modifications of arguments used in$\ $Subcase 1.2,
we get 
\begin{equation*}
\Pi _{2}(\cdot ,f_{2},...,f_{m}):F_{p_{1},q}^{s_{1}}\longrightarrow
F_{t,q}^{s_{1}}.
\end{equation*}%
Hence by the same interpolation method given above, we get the desired
result.

$\bullet $ \textbf{Step 2. }The following substeps are involved in finding
the desired estimate.

$\bullet $ \textbf{Substep 2.1.} $s_{i}=\tfrac{n}{p_{i}},i\in I\backslash
\left\{ 1\right\} $. In this case we have $p<p_{1}$. We distinguish between
the following two cases:

$\bullet $ \textbf{Case 1.}%
\begin{equation*}
\tfrac{1}{p_{1}}+\sum_{i\in I_{1}\backslash \left\{ 1\right\} }\tfrac{1}{%
p_{i}}\leq \tfrac{1}{p}\leq \tfrac{1}{t}.
\end{equation*}%
We set 
\begin{equation*}
\tfrac{1}{p}=\tfrac{1}{p_{1}}+\sum_{i\in I_{1}\backslash \left\{ 1\right\} }%
\tfrac{1}{p_{i}}+\sum_{i\in I_{2}\backslash \left\{ 1\right\} }\tfrac{1}{%
t_{i}},\quad t_{i}\geq p_{i},i\in I_{2}\backslash \left\{ 1\right\} .
\end{equation*}%
It is not hard to obtain%
\begin{equation*}
2^{j\left( s_{1}+\theta \right) }\Big\|\sum^{j}\Big(\prod_{i=1}^{m}\Delta
_{k_{i}}f_{i}\Big)\Big\|_{p}\leq c\text{ }2^{j(\theta -\sum_{i\in
I_{1}\backslash \left\{ 1\right\} }\tfrac{n}{p_{i}})}\big\|f_{1}\mid
B_{p_{1},\infty }^{s_{1}}\big\|\prod_{i=2}^{m}\big\|f_{i}\mid
B_{p_{i},\infty }^{n/p_{i}}\big\|
\end{equation*}%
for any $j\in \mathbb{N}_{0}$. We choose $\theta $ as in $\mathrm{%
\eqref{theta}}$ and using $\mathrm{\eqref{Cond5.7}}$ and $\mathrm{%
\eqref{Cond5.9}}$ we get the desired estimate.

$\bullet $ \textbf{Case 2.}%
\begin{equation*}
\tfrac{1}{p_{1}}<\tfrac{1}{p}<\tfrac{1}{p_{1}}+\sum_{i\in I_{1}\backslash
\left\{ 1\right\} }\tfrac{1}{p_{i}}.
\end{equation*}%
First we see that $s_{1}>\tfrac{n}{p_{1}}-n$, see $\mathrm{\eqref{new-cond1}}
$. Let $v>0$ be such that 
\begin{equation*}
\max \Big(\tfrac{1}{p},\tfrac{1}{p_{1}}-\tfrac{s_{1}}{n}\Big)<\tfrac{1}{v}<%
\tfrac{1}{p_{1}}+\sum_{i\in I_{1}\backslash \left\{ 1\right\} }\tfrac{1}{%
p_{i}}.
\end{equation*}%
We set 
\begin{equation*}
\tfrac{1}{v}=\tfrac{1}{p_{1}}+\sum_{i\in I_{1}\backslash \left\{ 1\right\} }%
\tfrac{1}{t_{i}},\quad t_{i}>p_{i},i\in I_{1}\backslash \left\{ 1\right\}
\end{equation*}%
and $\theta >0$ such that\ 
\begin{equation*}
-s_{1}-\tfrac{n}{v}<\theta -\tfrac{n}{p}<-\tfrac{n}{p_{1}}.
\end{equation*}%
Let $\sigma $ be as in $\mathrm{\eqref{sigma}}$. We can prove the following
estimates 
\begin{align*}
2^{j\left( \sigma +\theta \right) }\Big\|\sum^{j}\Big(\prod_{i=1}^{m}\Delta
_{k_{i}}f_{i}\Big)\Big\|_{v}\leq & c\text{ }2^{j\left( \sigma -s_{1}+\theta
\right) }\big\|f_{1}\mid B_{p_{1},\infty }^{s_{1}}\big\|\prod_{i\in
I_{1}\backslash \left\{ 1\right\} }\big\|\Delta _{j}f_{i}\big\|%
_{t_{i}}\prod_{i\in I_{2}\backslash \left\{ 1\right\} }\big\|Q_{j}f_{i}\big\|%
_{\infty } \\
\leq & c\text{ }2^{j(\theta -\tfrac{n}{p}+\tfrac{n}{p_{1}})}\big\|f_{1}\mid
B_{p_{1},\infty }^{s_{1}}\big\|\prod_{i=2}^{m}\big\|f_{i}\mid
B_{p_{i},\infty }^{n/p_{i}}\big\|,\quad j\in \mathbb{N}_{0}.
\end{align*}%
The result follows from $\mathrm{\eqref{Cond5.7}}$, $\mathrm{\eqref{Cond5.9}}
$ and the fact that $\big\{2^{j(\theta -\tfrac{n}{p}+\tfrac{n}{p_{1}})}\big\}%
\in \ell _{\infty }$.

$\bullet $ \textbf{Substep 2.2.} $s_{i}>\tfrac{n}{p_{i}},i\in I_{3}\subset I$%
\textbf{.} If $s_{i}>\tfrac{n}{p_{i}},i\in I_{3}\subsetneq I$, we can cover
this case by the same arguments given in Step 1. Let $f_{i}\in
B_{p_{i},\infty }^{s_{i}},i\in \left\{ 2,...,m\right\} $. If $s_{i}>\tfrac{n%
}{p_{i}}$, $i\in I\backslash \left\{ 1\right\} $, then we can prove that $%
\Pi _{2}(\cdot ,f_{2},...,f_{m}):F_{p_{1},q}^{s_{1}}\longrightarrow
F_{t,q}^{s_{1}}$. We get $\Pi _{2}(\cdot
,f_{2},...,f_{m}):F_{p_{1},q}^{s_{1}}\longrightarrow F_{p_{1},q}^{s_{1}}$,
by using the embeddings 
\begin{equation*}
B_{v,\infty }^{s_{1}+\tfrac{n}{p_{i_{0}}}+\theta }\hookrightarrow
F_{v,\infty }^{s_{1}+\tfrac{n}{p_{i_{0}}}}\hookrightarrow F_{p_{1},q}^{s_{1}}
\end{equation*}%
for some $i_{0}\in I_{1}\backslash \left\{ 1\right\} $ and $\tfrac{1}{v}=%
\tfrac{1}{p_{1}}+\tfrac{1}{p_{i_{0}}}$, with 
\begin{equation*}
-s_{1}-\tfrac{n}{p_{i_{0}}}<\theta <s_{i_{0}}-\tfrac{n}{p_{i_{0}}},
\end{equation*}%
which is possible because of $s_{1}+s_{2}>0$. We conclude the desired result
using the interpolation method given in Subcase 1.2 in the proof of Theorem %
\ref{multi-positive-smothness}.
\end{proof}

\begin{remark}
We hope that our results will be used in the theory of function spaces,
composition operators and partial differential equations. We refer the
reader to the paper \cite{BD23} were the authors studied the $2$-linear map%
\begin{equation}
A_{p_{1},q}^{s}(\mathbb{R}^{n},|\cdot |^{\alpha })\cdot A_{p_{2},q_{2}}^{r}(%
\mathbb{R}^{n},|\cdot |^{\alpha })\hookrightarrow A_{p,q}^{s}(\mathbb{R}%
^{n},|\cdot |^{\alpha }),  \label{product}
\end{equation}%
induced by 
\begin{equation*}
\left( f_{1},f_{2}\right) \longrightarrow f_{1}\cdot f_{2},
\end{equation*}%
with an application to the continuity of pseudodifferential operators on
Triebel-Lizorkin spaces of power weight. Here $A_{p,q}^{s}(\mathbb{R}%
^{n},|\cdot |^{\alpha })$ stands for either the Besov space $B_{p,q}^{s}(%
\mathbb{R}^{n},|\cdot |^{\alpha })$ or the Triebel-Lizorkin space $%
F_{p,q}^{s}(\mathbb{R}^{n},|\cdot |^{\alpha })$. Also, they have proved the
necessity of the majority assumptions on the parameters.
\end{remark}

\textbf{Acknowledgement}. This work was supported by the General Directorate
of Scientific Research and Technological Development of Algeria and the
General Direction of Higher Education and Training (Grant no.
C00L03UN280120220004), Algeria.

\end{document}